\numberwithin{equation}{section}
\theoremstyle{plain}
\newtheorem{thm}{Theorem}[section]
\newtheorem{cor}[thm]{Corollary} 
\newtheorem{lemma}[thm]{Lemma} 
\newtheorem{prop}[thm]{Proposition}
\newtheorem{hyp}[thm]{Hypotheses}
\newtheorem{remark}[thm]{Remark}
\theoremstyle{remark}
\theoremstyle{definition}
\newtheorem{defi}[thm]{Definition}
\newtheorem{notation}[thm]{Notations}
\newcommand{\ds}{\displaystyle}
\def\question#1{\ifmmode\text{\bf \color{red} Q: #1}\else{\bf
    Q}\footnote{#1}\fi:\color{red}}
\newcommand\eps{\epsilon}
\newcommand\vareps{\varepsilon}
\newcommand{\la}{\lambda}
\newcommand\Acal{{\mathcal{A}}}
\newcommand\Ncal{{\mathcal{N}}}
\newcommand\At{{\bf A}}
\newcommand\Bt{{\bf B}}
\newcommand\Xt{{\bf X}}
\newcommand\Vt{{\bf V}}
\newcommand\Ut{{\bf U}}
\newcommand\R{{\mathbb R}}
\newcommand\D{{\mathbb D}}
\newcommand\C{{\mathbb C}}
\newcommand\N{{\mathbb N}}
\newcommand\T{{\mathbb T}}
\newcommand\AD{{\Acal(\D_{1+\eps}^n)}}
\newcommand{\BH}{\mathcal{B}(\mathcal{H})}
\newcommand{\Hcal}{\mathcal{H}}
\newcommand{\Ccal}{\mathcal{C}}
\newcommand\Lcal{{\mathcal{L}}}
\newcommand{\Ical}{\mathcal{I}}
\newcommand\Tr{{\mathrm{Tr}}}
\newcommand{\im}{\text{\rm Im}}
\newcommand{\re}{\text{\rm Re}}
\renewcommand{\i}{\text{\rm i}}
\begin{document}
\title{Trace formulas for tuples of commuting contractions}

\author[Skripka]{Anna Skripka$^{*}$} \email{skripka@math.unm.edu}

\thanks{\indent\llap{${}^{*}$}Research supported in part by NSF
  grant DMS-1249186.}

\address{A.S., Department of Mathematics and Statistics, University of New Mexico, 400 Yale Blvd NE, MSC01 1115, Albuquerque, NM 87131, USA}

\subjclass[2000]{Primary 47A55, secondary 47A13, 47B10}

\keywords{Commuting contractions, perturbation theory.}

\date{\today}

\begin{abstract}
  This paper extends the trace formulas of \cite{dsD} with perturbations in normed ideals of $\BH$ to multivariate functions of commuting contractions admitting a dilation to commuting normal contractions.
\end{abstract}

\maketitle

\section{Introduction.}

One of the fundamental results in perturbation theory is Krein's trace formula \cite{Krein}
\begin{align}
\label{Ktf}
\Tr\left(f(H_0+V)-f(H_0)\right)=\int_\R f'(\la)\xi(\la)\,d\la,
\end{align}
where $H_0$ is a self-adjoint operator defined in a separable Hilbert space $\Hcal$, $V$ a self-adjoint trace class operator, and $f$ a sufficiently nice scalar function. The integrable function $\xi$ is determined by the operators $H_0$ and $V$ and does not depend on $f$.
The formula \eqref{Ktf} has found a number of applications and generalizations (see, e.g., survey \cite{S} for details and references). In particular, when $V$ is a Hilbert-Schmidt operator, we have Koplienko's trace formula \cite{Kop}
\begin{align}
\label{Kotf}
\Tr\left(f(H_0+V)-f(H_0)-\frac{d}{dt}\bigg|_{s=0}f(H_0+sV)\right)=\int_\R f''(\la)\eta(\la)\,d\la,
\end{align}
where $\eta$ is an integrable function determined by $H_0$ and $V$.

Let $\Ical$ be a normed ideal of $\BH$ (the algebra of bounded linear operators on $\mathcal{H}$) with norm $\|\cdot\|_\Ical$ (see Definition \ref{ideal}). Let $\tau_\Ical$ be a trace on $\Ical$ bounded with respect to the ideal norm $\|\cdot\|_\Ical$. Examples of $(\Ical,\tau_\Ical)$ include $(S^1,\Tr)$, the trace class ideal with the canonical trace, and $(\Lcal^{(1,\infty)},\Tr_\omega)$, the dual Macaev ideal with the Dixmier trace $\Tr_\omega$ corresponding to a generalized limit $\omega$ on $\ell^\infty(\N)$. (Definitions and references can be found in \cite{dsD}.)

The following results were obtained in \cite{dsD}.

\begin{thm}\cite[Theorem 3.4]{dsD}
\label{dsmt1}
Let $H_0$ and $H$ be contractions. If $V=H-H_0\in\Ical$, then there exists a (countably additive, complex)
finite measure $\mu=\mu_{H_0,H}$ such that
\begin{align*}
\|\mu\|\leq \min\left\{\|\tau_\Ical\|_{\Ical^*}\cdot\|V\|_\Ical,\,
\tau_\Ical\big(|\re(V)|\big)+\tau_\Ical\big(|\im(V)|\big)\right\}
\end{align*} and, for every $f$ analytic on $\D_{1+\eps}$, a disc centered at $0$ of radius $1+\eps$, with $\eps>0$,
\begin{align}
\label{dstf2}
\tau_\Ical\big(f(H_0+V)-f(H_0)\big)=\int_\Omega f'(\la)\,d\mu(\la),
\end{align}
where $\Omega=\T$. If, in addition, $H_0$ and $V$ are self-adjoint, then there exists unique real-valued measure $\mu$ such that \eqref{dstf2} holds with $\Omega=\text{\rm conv}\,(\sigma(H_0)\cup\sigma(H_0+V))$ for $f$ real-analytic on $\Omega$.
\end{thm}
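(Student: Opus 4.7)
My plan is to construct $\mu$ by Hahn--Banach extension and Riesz--Markov representation of a linear functional initially defined on polynomials. Define
\begin{equation*}
\Phi(p') := \tau_\Ical\bigl(p(H_0+V) - p(H_0)\bigr)
\end{equation*}
for polynomials $p$; this is well-defined since adding a constant to $p$ does not affect the difference. To relate $\Phi$ to the derivative, I would combine the Duhamel-type identity $p(H_0+V)-p(H_0)=\int_0^1\frac{d}{dt}p(H_0+tV)\,dt$ with the Leibniz expansion $\frac{d}{dt}(H_0+tV)^n = \sum_{j=0}^{n-1}(H_0+tV)^j V(H_0+tV)^{n-1-j}$, then apply trace cyclicity inside $\tau_\Ical$ to collapse each summand to $V(H_0+tV)^{n-1}$. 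This yields, by linearity,
\begin{equation*}
\tau_\Ical\bigl(p(H_0+V)-p(H_0)\bigr) = \int_0^1 \tau_\Ical\bigl(V\,p'(H_0+tV)\bigr)\, dt.
\end{equation*}

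The crucial observation is that $H_0+tV = (1-t)H_0 + t(H_0+V)$ is a convex combination of contractions, hence itself a contraction, so von Neumann's inequality gives $\|p'(H_0+tV)\|_{\BH}\le\|p'\|_{\Ccal(\T)}$ uniformly in $t\in[0,1]$. Bounding $|\tau_\Ical(V\,Y)|$ two ways produces the two quantities in the claimed minimum: the first via $|\tau_\Ical(VY)|\le\|\tau_\Ical\|_{\Ical^*}\|V\|_\Ical\|Y\|$; the second by decomposing $V = \re V + i\,\im V$ and observing that for a self-adjoint $A\in\Ical$ with polar decomposition $A=U|A|$ (where $U$ commutes with $|A|$), $X\mapsto\tau_\Ical(|A|^{1/2}X|A|^{1/2})$ is a positive linear functional on $\BH$ of norm $\tau_\Ical(|A|)$, so cyclicity gives $|\tau_\Ical(AY)|\le\tau_\Ical(|A|)\|Y\|$. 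Thus $\Phi$ is bounded on polynomials in the sup norm on $\T$ by the stated minimum; polynomials being dense in $\Acal(\overline{\D})$, I extend $\Phi$ by Hahn--Banach to $\Ccal(\T)$ preserving norm, and invoke Riesz--Markov to obtain the desired finite measure $\mu$ on $\Omega=\T$ with $\|\mu\|$ no larger than either estimate.

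For $f$ analytic on $\D_{1+\eps}$, approximating $f$ by its partial Taylor sums $p_n$ and representing $p_n(H_0+V)-p_n(H_0)-f(H_0+V)+f(H_0)$ by the Cauchy integral
\begin{equation*}
\frac{1}{2\pi i}\oint_{|\xi|=r}(p_n(\xi)-f(\xi))(\xi-H_0-V)^{-1}V(\xi-H_0)^{-1}\,d\xi
\end{equation*}
over a contour $1<r<1+\eps$ gives convergence in $\|\cdot\|_\Ical$, so both sides of \eqref{dstf2} pass to the limit by $\|\tau_\Ical\|_{\Ical^*}$-boundedness of $\tau_\Ical$ and uniform convergence of $p_n'$ to $f'$ on $\T$. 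For the self-adjoint case, the spectra of $H_0$ and $H_0+V$ lie in $[-1,1]$, so the Cauchy contour can be deformed to a boundary of a thin neighborhood of $\Omega=\text{\rm conv}(\sigma(H_0)\cup\sigma(H_0+V))$; cyclicity shows $\tau_\Ical(V\,p'(H_0+tV))$ is its own complex conjugate when $p$ has real coefficients, so a real-valued Hahn--Banach extension produces a real measure on $\Omega$, and uniqueness follows since the polynomials are dense in $\Ccal(\Omega,\R)$ by Weierstrass. The main obstacle I anticipate is the clean justification of the second trace inequality at the level of generality required here---particularly showing that $X\mapsto\tau_\Ical(|A|^{1/2}X|A|^{1/2})$ indeed extends to a bounded positive functional on $\BH$ with the advertised norm when $\tau_\Ical$ is a singular trace such as a Dixmier trace on the dual Macaev ideal.
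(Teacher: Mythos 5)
Your proposal is correct, and it reproduces the skeleton that this paper (which only quotes Theorem \ref{dsmt1} from \cite{dsD} and proves its $n$-variable generalization in Corollary \ref{ftc}, Theorem \ref{dthm} and Theorem \ref{trthm}) uses in the case $n=1$: the Duhamel/fundamental-theorem-of-calculus reduction, the Leibniz expansion of $\frac{d}{dt}(H_0+tV)^m$ collapsed by trace cyclicity to $\tau_\Ical(V\,p'(H_0+tV))$, the von Neumann inequality to bound $\|p'(H_0+tV)\|$ by $\|p'\|_{L^\infty(\T)}$, and then Hahn--Banach plus Riesz--Markov. The one place where you genuinely diverge is the second bound in the minimum: the paper obtains $\tau_\Ical(|\re V|)+\tau_\Ical(|\im V|)$ by passing to a unitary (normal) dilation, extending the ideal and trace via Proposition \ref{dilation}, and summing $|\tau_\Ical(E(\delta)V)|$ over partitions of the spectral measure (Lemma \ref{m2v}\eqref{m2vi}), whereas you bound $|\tau_\Ical(AY)|\le\tau_\Ical(|A|)\,\|Y\|$ directly for self-adjoint $A\in\Ical$ via the positive functional $X\mapsto\tau_\Ical(|A|^{1/2}X|A|^{1/2})$. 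The obstacle you flag at the end is not a real one: that functional is positive on the unital C*-algebra $\BH$, hence bounded with norm equal to its value at the identity, namely $\tau_\Ical(|A|)$; this uses only positivity and the trace property of $\tau_\Ical$ (with $U|A|^{1/2}$ and $|A|^{1/2}Y$ both in $\Ical^{1/2}$, so cyclicity applies), and no normality, so it is valid for Dixmier-type traces. Your route is cleaner for a single contraction; the paper's partition form of the estimate is what survives in the multivariate setting, where the functional is built from Riemann sums against the joint spectral measure of the dilated tuple. Your Cauchy-integral passage from polynomials to $f\in\Acal(\D_{1+\eps})$ also differs from the paper's direct manipulation of absolutely convergent power series, but both are routine and correct.
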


\begin{thm}\cite[Theorem 3.4]{dsD}
\label{dsmt2}
Let $H_0$ and $H$ be contractions. Suppose that $\Ical^{1/2}$ is a normed ideal with the norm satisfying \eqref{normCS}. If $V=H-H_0\in\Ical^{1/2}$, then there exists a (countably additive, complex)
finite measure $\nu=\nu_{H_0,H}$ such that
\begin{align*}
\|\nu\|\leq \frac12\tau_\Ical(|V|^2)
\end{align*} and, for every $f$ analytic on $\D_{1+\eps}$, with $\eps>0$,
\begin{align}
\label{dstf2}
\tau_\Ical\left(f(H_0+V)-f(H_0)-\frac{d}{ds}\bigg|_{s=0}f(H_0+sV)\right)=\int_\Omega f''(\la)\,d\nu(\la),
\end{align}
where $\Omega=\T$. If, in addition, $H_0$ and $V$ are self-adjoint, then there exists unique real-valued measure $\nu$ such that \eqref{dstf2} holds with $\Omega=\text{\rm conv}\,(\sigma(H_0)\cup\sigma(H_0+V))$ for $f$ real-analytic on $\Omega$.
\end{thm}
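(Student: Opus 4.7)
My plan is to identify $\nu$ as the representing measure of the linear functional
$$\Phi(f) := \tau_\Ical\!\left(f(H_0+V) - f(H_0) - \tfrac{d}{ds}\bigg|_{s=0} f(H_0+sV)\right)$$
defined on $f\in\Acal(\D_{1+\eps})$, and to establish a bound of the form $|\Phi(f)|\le\tfrac12\tau_\Ical(|V|^2)\|f''\|_{C(\T)}$ that allows invocation of a Riesz-type representation. Using Cauchy's formula together with the second-order resolvent identity
$$(\la-H)^{-1} - (\la-H_0)^{-1} - (\la-H_0)^{-1}V(\la-H_0)^{-1} = (\la-H)^{-1}V(\la-H_0)^{-1}V(\la-H_0)^{-1},$$
and the trace identity $\tau_\Ical\bigl(\tfrac{d}{ds}\big|_{s=0} f(H_0+sV)\bigr) = \tau_\Ical(f'(H_0)V)$ (obtained by expanding in monomials and using cyclicity, which applies since $V^k\in\Ical$ for $k\ge 2$), I arrive at the contour representation
$$\Phi(f) = \frac{1}{2\pi\i}\oint_{|\la|=1+\eps/2} f(\la)\, G(\la)\,d\la, \quad G(\la) := \tau_\Ical\!\left[(\la-H)^{-1}V(\la-H_0)^{-1}V(\la-H_0)^{-1}\right].$$

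Since $G$ is holomorphic in $\{|\la|>1\}$ and decays as $|\la|^{-3}$, it admits a holomorphic antiderivative $F$ with $F''=G$, also holomorphic in that region and vanishing at infinity, so integration by parts along the closed contour yields $\Phi(f)=\tfrac{1}{2\pi\i}\oint f''(\la)F(\la)\,d\la$. The Cauchy--Schwarz property \eqref{normCS} for $\Ical^{1/2}$ allows the two $V$-factors in $G$ to be paired as $\tau_\Ical(|V|^2)$; combined with the bound $\|(\la-H)^{-1}\|,\|(\la-H_0)^{-1}\|\le 2/\eps$ on the contour, this produces an estimate on $\|F\|_{L^\infty(|\la|=1+\eps/2)}$ that sharpens, once $F$ is normalized so that its Laurent expansion has no constant or $\la^{-1}$ term (reflecting $\Phi(1)=\Phi(z)=0$), to $\tfrac12\tau_\Ical(|V|^2)$, the factor $\tfrac12$ arising from the double integration by parts. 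Approximating $H_0,H$ by the strict contractions $(1-\delta)H_0,(1-\delta)H$ and letting $\delta\downarrow 0$, the densities $\frac{F(\la)}{2\pi\i}\,d\la$ on shrinking contours form a weak-$\ast$ precompact family in $C(\T)^*$, and any cluster point $\nu$ inherits the total-variation bound.

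For the self-adjoint case, the symmetry $G(\bar\la)=\overline{G(\la)}$ forces $\nu$ to be real-valued, and replacing the contour by one enclosing $\mathrm{conv}(\sigma(H_0)\cup\sigma(H_0+V))$ supports $\nu$ on that convex hull; uniqueness follows from Riesz representation applied to $C(\mathrm{conv}(\sigma(H_0)\cup\sigma(H_0+V)))$. The main obstacle I anticipate is controlling the antiderivative $F$ uniformly as the contour approaches $\T$, where the resolvents of $H_0$ and $H$ can blow up along their spectra; this is precisely where the CS-type property \eqref{normCS}, the hypothesis $V\in\Ical^{1/2}$, and the cancellations in the second-order resolvent identity must jointly be exploited, in direct parallel with how the first-order analogue works in Theorem \ref{dsmt1}.
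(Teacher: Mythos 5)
First, note that the paper does not prove this statement at all: it is quoted verbatim from \cite[Theorem 3.4]{dsD} and used as a black box; the closest the present paper comes to a proof is its multivariate generalization (Theorem \ref{tr2thm}), whose method --- the Taylor-remainder identity of Lemma \ref{remr2}, the trace estimate of Theorem \ref{d2thm} obtained via dilation to normal/unitary tuples, spectral measures, divided differences, and the Cauchy--Schwarz Lemma \ref{m2v}\eqref{m2vii}, followed by Riesz--Markov --- is the single-variable argument of \cite{dsD} in disguise. Your route is genuinely different (Cauchy integral plus the second-order resolvent identity), and the contour representation $\Phi(f)=\frac{1}{2\pi\i}\oint f(\la)G(\la)\,d\la$ on a fixed contour $|\la|=1+\eps/2$ is fine, since the combination $f(H)-f(H_0)-\frac{d}{ds}\big|_{s=0}f(H_0+sV)$ corresponds to the resolvent remainder $(\la-H)^{-1}V(\la-H_0)^{-1}V(\la-H_0)^{-1}$, which lies in $\Ical$ by \eqref{normCS}. (Your side identity $\tau_\Ical\big(\frac{d}{ds}\big|_{s=0}f(H_0+sV)\big)=\tau_\Ical(f'(H_0)V)$ is not legitimate as stated: for $V\in\Ical^{1/2}\setminus\Ical$ neither side need be defined, which is exactly why the first-order term must be kept inside the combination rather than traced separately. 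Fortunately you do not actually need it for the contour formula.)

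The genuine gap is the total-variation bound $\|\nu\|\leq\frac12\tau_\Ical(|V|^2)$, which is the entire content of the theorem. On the contour $|\la|=1+\eps/2$ your estimate of $G$ via \eqref{normCS} and $\|(\la-H)^{-1}\|\leq 2/\eps$ gives $|G(\la)|\lesssim\eps^{-3}\tau_\Ical(|V|^2)$, and two antiderivatives (or the normalization killing the first two Laurent coefficients) cannot convert an $\eps^{-3}$ bound into one that is uniform as the contour shrinks to $\T$; there is no general principle making $F$ bounded independently of the distance to the spectrum, and for non-normal contractions the resolvent genuinely blows up like the reciprocal of that distance. You acknowledge this as ``the main obstacle'' and assert that \eqref{normCS} and the cancellations ``must jointly be exploited'' there --- but that is precisely the step that is missing, and it is the step the paper's method is designed to avoid: one dilates the contractions to unitaries (Proposition \ref{dilation} transports $\tau_\Ical$ to the dilation space), writes the trace of the second derivative through the joint spectral measure and divided differences of $f$ (as in \eqref{djj}), and invokes Lemma \ref{ddder} together with Lemma \ref{m2v}\eqref{m2vii} to get $\big|\tau_\Ical\big(\frac{d^2}{ds^2}\big|_{s=t}f\big)\big|\leq\|f''\|_{L^\infty(\T^n)}\tau_\Ical(|V|^2)$ with no resolvents at all; the factor $\frac12$ then comes from $\int_0^1(1-t)\,dt$ in Lemma \ref{remr2}, not from integration by parts on the contour. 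Without the dilation-plus-spectral-measure mechanism (or some substitute for it), your argument does not close.
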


We extend the results of Theorems \ref{dsmt1} and \ref{dsmt2} to multivariate functions of commuting contractions admitting a dilation to commuting normal contractions. If $\tau_\Ical$ has a nontrivial normal component, then we also request that the tuples of contractions differ by a tuple of commuting elements.

Let $\At_n=(A_1,\ldots,A_n)$ and $\Bt_n=(B_1,\ldots,B_n)$ be $n$-tuples of pairwise commuting contractions and define $V_j=B_j-A_j$, $1\leq j\leq n$. Assume, in addition, that $\Vt_n=(V_1,\ldots,V_n)$ consists of commuting elements, which is equivalent to $\At_n$ and $\Bt_n$ being connected by a linear path of commuting contractions (see Lemma \ref{pathperturb}), and that for every $s\in [0,1]$, the tuple $\At_n+s\Vt_n$ has a commuting contractive normal dilation \eqref{dilfla}. For $\Vt_n$ consisting of elements of $\Ical$, we prove (see Theorem \ref{trthm}) existence of finite measures $\mu_j$, $1\leq j\leq n$, determined by $\At_n$ and $\Bt_n$ and satisfying
\begin{align*}
\tau_\Ical\big(f(\Bt_n)-f(\At_n)\big)
=\sum_{j=1}^n\int_{\T^n}\frac{\partial f }{\partial z_j}(z_1,\ldots,z_n)\,d\mu_j(z_1,\ldots,z_n).
\end{align*}
The measure obtained by projecting $\mu_j$ to the $j$-th component satisfies Theorem \ref{dsmt1} for the pair of contractions $A_j$ and $B_j$, $1\leq j\leq n$.

If we replace the requirement $V_j\in\Ical$ by $V_j\in\Ical^{1/2}$, $1\leq j\leq n$, where $\Ical^{1/2}$ satisfies \eqref{normCS}, then we prove (see Theorem \ref{tr2thm}) existence of finite measures $\nu_{ij}$, $1\leq i\leq j\leq n$, determined by $\At_n$ and $\Bt_n$ and satisfying
\begin{align*}
&\tau_\Ical\left(f(\Bt_n)-f(\At_n)-\frac{d}{ds}\bigg|_{s=0}f(\Xt_n(s))\right)
=\sum_{1\leq j\leq n}\int_{\T^n}\frac{\partial^2 f }{\partial z_j^2}(z_1,\ldots,z_n)\,d\nu_{jj}(z_1,\ldots,z_n)\\
&\quad\quad+2\sum_{1\leq i< j\leq n}
\int_{\T^n}\frac{\partial^2 f }{\partial z_i\partial z_j}(z_1,\ldots,z_n)\,d\nu_{ij}(z_1,\ldots,z_n).
\end{align*}
The projection of the measure $\nu_{jj}$ to the $j$-th component satisfies Theorem \ref{dsmt2} for the pair of contractions $A_j$ and $B_j$, $1\leq j\leq n$.

If $\At_n$ and $\Bt_n$ are tuples of commuting self-adjoint operators, then there exist finite measures $\mu_j$ and $\nu_{ij}$ such that the above trace formulas hold with integrals evaluated over $[-1,1]^n$ (see Theorems \ref{trthm} and \ref{tr2thm}).
In Corollaries \ref{arem1} and \ref{arem2}, we obtain more information about structure of the measures $\mu$ and $\nu$ provided by Theorems \ref{dsmt1} and \ref{dsmt2}, respectively, for a pair of normal operators $(H_0,H_0+V)$ satisfying $[\re(V),\im(V)]=0$ (which includes the case of purely real and purely imaginary perturbations).

The results follow from estimates for traces of the first and second order partial derivatives of multivariate operator functions that we establish in Theorems \ref{dthm} and \ref{d2thm}. We note that derivatives of single variable operator functions are fairly well explored (see, e.g., \cite{Peller,PSS,PSS-circle}), which is not the case with multivariate functions.
Existence of the first and higher order derivatives of multivariate functions along paths of tuples of commuting self-adjoint matrices in the operator norm was discussed in \cite{Bickel} and of the first order derivatives of functions along commuting tuples of self-adjoint operators in the Schatten $p$-norms, $1<p<\infty$, in \cite{KPSS}.

The operator derivatives $\frac{d}{ds}\big|_{s=0}f(\Xt_n(s))$ in \cite{KPSS} are evaluated in the norm of the Schatten ideal $S^p$, $1<p<\infty$, along paths of tuples of bounded commuting self-adjoints $\Xt_n(s)$ with tangent vectors in the closure of the narrow tangent space
\[\Gamma_p^0(\At_n)=\big\{(\i[A_1,Y]+Z_1,\ldots,\i[A_n,Y]+Z_n):\, Y\in S^p,\, Z\in\{\At_n\}''\cap S^p\big\},\]
where $\At_n=\Xt_n(0)$ and $\{\At_n\}''$ is a bicommutant of the family $\{A_1,\ldots,A_n\}$.
If $\Vt_n\in\Gamma_1^0(\At_n)$ and $f$ is analytic, then $\tau_{S^1}\left(\frac{d}{ds}\big|_{s=0}f(\Xt_n(s))\right)$ is trivial to handle and not enough for our goals. However, we consider only linear paths of operators and only analytic scalar functions $f$, what allows to handle more general $\Vt_n$ and non-self-adjoint operators (see Lemma \ref{dlemma} and Remark \ref{spremark}).

Despite all the commutativity assumptions, the second order derivatives of multivariate operator functions are more complex objects than the second order derivatives of single variable operator functions. If $f$ is a polynomial, $A$ a bounded operator, $V\in S^n$, and $\Tr$ a canonical trace, then $\Tr\big(\frac{d^n}{ds^n}f(A+sV)\big)=\Tr\big(\frac{d^{n-1}}{ds^{n-1}}f'(A+sV)\,V\big)$ (see, e.g., \cite[Lemma 2.2]{PSS-circle}). However, the analogous property fails to hold for multivariate functions already in the matrix case with all but one components of a perturbation equal to zero. For instance, if $A=\begin{pmatrix}1&0\\0&0\end{pmatrix}$, $V=\begin{pmatrix}0&1\\1&0\end{pmatrix}$, and $f(x_1,x_2)=x_1^3x_2$, then $\Tr\big(\frac{d^2}{ds^2}\big|_{s=0}f(A+sV,A)\big)=4$ while $\Tr\big(\frac{d}{ds}\big|_{s=0}(\frac{\partial f}{\partial x_1})(A+sV,A)\,V\big)=3$. In the multivariate case we also have mixed partial derivatives, which do not exist in the single variable case.

\section{Preliminaries and notations.}

\paragraph{\bf Multivariate operator functions.}
Denote by $\Ccal_n$ the set of all $n$-tuples of pairwise commuting (nonstrict) contractions acting on a separable Hilbert space $\mathcal{H}$. We have $H^\infty$ functional calculus for $\Ccal_n$, that is, if a function $f(z_1,\ldots,z_n)$ is holomorphic on a polydisc
\[\D^n_{1+\eps}=\{(z_1,\ldots,z_n)\in\C^n:\, |z_j|<1+\eps, 1\leq j\leq n\},\quad \eps>0,\]
and given by an absolutely convergent series
\begin{align}
\label{sr}
f(z_1,\ldots,z_n)=\sum_{k_1,\ldots,k_n\geq 0}c_{k_1,\ldots,k_n}z_1^{k_1}\ldots z_n^{k_n},\quad\text{for }
(z_1,\ldots,z_n)\in\D^n_{1+\eps},
\end{align}
then the operator function is defined by
\begin{align}
\label{opf}
f(A_1,\ldots,A_n)=\sum_{k_1,\ldots,k_n\geq 0}c_{k_1,\ldots,k_n}A_1^{k_1}\ldots A_n^{k_n},\quad\text{for }
(A_1,\ldots,A_n)\in\Ccal_n.
\end{align}
We will denote the space of functions holomorphic on $\D^n_{1+\eps}$ by $\Acal(\D^n_{1+\eps})$.
We will also consider functions representable by their Taylor series \eqref{sr} with $(z_1,\ldots,z_n)\in(-1-\eps,1+\eps)^n$. The respective space of functions is denoted by $\Acal((-1-\eps,1+\eps)^n)$.

Although the main results are obtained for functions of tuples of commuting contractions, some of the results hold for operators $f(A_1,\ldots,A_n)$ defined by the power series \eqref{opf} with $A_1,\ldots,A_n$ not necessarily commuting.

We will prove trace formulas for tuples of contractions $\Xt_n\in\Ccal_n$ satisfying the von Neumann inequality
\begin{align}
\label{vNineq}
\|f(\Xt_n)\|\leq \|f\|_{L^\infty(\T^n)},\quad f\in\AD,\quad\text{with }\eps>0.
\end{align}
The von Neumann inequality holds for any single contraction \cite{vN} and for any pair of commuting contractions \cite{Ando}. For any $n\in\N$, $n$-tuples of commuting normal contractions
$\Xt_n$ satisfy the von Neumann inequality. Another sufficient condition for an $n$-tuple of commuting contractions to satisfy \eqref{vNineq} is established in \cite{KV}.

Recall that for $\Xt_n$ a tuple of bounded commuting normal operators and $f$ a bounded Borel function on $\sigma(\Xt_n)$, the joint spectrum of the operators $X_1,\ldots,X_n$ (which is a subset of $\sigma(X_1)\times\ldots\times\sigma(X_n)$), the operator function $f(\Xt_n)$ can be represented as an integral with respect to the joint spectral measure $E$ of the tuple
\[f(\Xt_n)=\idotsint\limits_{\sigma(\Xt_n)}f(z_1,\ldots,z_n)\,dE(z_1,\ldots,z_n).\]
The measure $E$ is the product of the spectral measures $E_j$ of the operators $X_j$, $1\leq j\leq n$
\cite[Theorem 6.5.1 and Subsection 6.6.2]{BSbook}. In particular,
\[E(\delta_1\times\ldots\times\delta_n)=E_1(\delta_1)\ldots E_n(\delta_n),\]
where $\delta_1,\ldots,\delta_n$ are Borel subsets of $\C$.
The measure $E$ is supported in $\sigma(\Xt_n)$.

We will prove our results for tuples of commuting contractions that can be dilated to tuples of commuting normal contractions. That is, we will consider those $\Xt_n\in\Ccal_n$ for which there exists a Hilbert space $\mathcal{K}\supset\mathcal{H}$ and a tuple of commuting normal contractions $(U_1,\ldots,U_n)$ on $\mathcal{K}$ such that
\begin{align}
\label{dilfla}
X_{j_1}\ldots X_{j_l}= P_\mathcal{H} U_{j_1}\ldots U_{j_l}|_{\mathcal{H}},\quad l\in\N,\; 1\leq j_1,\ldots,j_l \leq n,
\end{align}
where $P_{\mathcal{H}}$ is the orthogonal projection from $\mathcal{K}$ onto $\mathcal{H}$. If $\Xt_n\in\Ccal_n$ satisfies \eqref{dilfla}, then it also satisfies \eqref{vNineq}. It is well known that a unitary dilation exists for a single contraction \cite{Nagy} and a commuting unitary dilation exists for a pair of commuting contractions \cite{Ando}. A necessary and sufficient condition for existence of a multivariate commuting unitary dilation \eqref{dilfla}, which is formulated in terms of the von Neumann inequalities for matrix-valued functions, can be found in \cite[Corollary 4.9]{Pisier}.

Throughout the paper, we assume the following notations.
\begin{notation}
\label{not}
All operators are assumed to be elements of $\BH$, unless stated otherwise.
\begin{enumerate}[(i)]
\item Denote the tuple of bounded operators $A_1,\ldots,A_n$ by
\begin{align*}
\At_n=(A_1,\ldots,A_n).
\end{align*}

\item 
Let $\Xt_n$ be an $n$-tuple of bounded operators and let $k_1,\ldots,k_n\in\N\cup\{0\}$. Denote
\begin{align*}
&T_{k_1,\ldots,k_n}(\Xt_n)=X_1^{k_1}\ldots X_n^{k_n},\\
&T_{k_m,\ldots,k_n}(_{m}\Xt_n)=X_m^{k_m}\ldots X_n^{k_n},\quad 1<m\leq n.
\end{align*}


\item For $\At_n,\Bt_n\in\Ccal_n$, denote
\begin{align*}
&V_j=B_j-A_j,\quad X_j(s)=A_j+sV_j,\quad 1\leq j\leq n,\;s\in[0,1],\\
&\Xt_n(s)=(X_1(s),\ldots,X_n(s)).
\end{align*}
\item Let $\Ncal_n$ denote the set of pairs $\{\At_n,\Bt_n\}$ in $\Ccal_n$ such that
the linear path joining $\At_n$ and $\Bt_n$ consists of tuples of commuting contractions admitting a commuting contractive normal dilation; that is, $\{\At_n,\Bt_n\}\in\Ncal_n$ if and only if $(A_1+tV_1,\ldots,A_n+tV_n)\in\Ccal_n$ satisfies \eqref{dilfla} (and, hence, satisfies \eqref{vNineq}) for every $t\in [0,1]$.
\end{enumerate}
\end{notation}

Note that if $\At_n,\Bt_n\in\Ccal_n$, then $X_j(t)=A_j(1-t)+tB_j$ is a contraction for every $t\in [0,1]$, $1\leq j\leq n$.
\bigskip

\paragraph{\bf Normed ideals.}
Our perturbations will be elements of symmetrically normed ideals.

\begin{defi}
\label{ideal}
An ideal $\Ical$ of $\BH$ is called a normed ideal if it is equipped with an ideal norm, namely, a norm $\|\cdot\|_\Ical$ satisfying
\begin{enumerate}[(i)]
\item $A\in\BH,B\in\Ical$, $0\leq A\leq B$ implies $\|A\|_\Ical\leq \|B\|_\Ical$,

\item there is a constant $K>0$ such that $\|B\|\leq K\|B\|_\Ical$ for all $B\in\Ical$,

\item for all $A,C\in\BH$ and $B\in\Ical$, we have $\|ABC\|_{\Ical}\leq\|A\|\|B\|_\Ical\|C\|$.
\end{enumerate}
\end{defi}

A trace $\tau_\Ical$ is $\|\cdot\|_\Ical$-bounded if there is a constant $M>0$ such that $|\tau_\Ical(A)|\leq M\|A\|_\Ical$, for every $A\in\Ical$. The infimum of such constants $M$ equals $\|\tau_\Ical\|_{\Ical^*}$.

We will also consider perturbations in the ideal $\Ical^{1/2}=\{A\in\BH:\, |A|^2\in\Ical\}$, which contains the ideal $\Ical$. For a positive trace $\tau_\Ical$ on $\Ical$ we have the Cauchy-Schwarz inequality
\[|\tau_\Ical(AB)|\leq\big(\tau_\Ical(|A|^2)\big)^{1/2}\big(\tau_\Ical(|B|^2)\big)^{1/2},\quad
A,B\in\Ical^{1/2}.\]
A sufficient condition for $\Ical^{1/2}$ to be a normed ideal with norm $\|A\|_{\Ical^{1/2}}=\| |A|^2 \|_{\Ical}^{1/2}$ and for the inequality
\begin{align}
\label{normCS}
\|AB\|_\Ical\leq\|A\|_{\Ical^{1/2}}\|B\|_{\Ical^{1/2}},\quad A,B\in\Ical^{1/2},
\end{align}
to hold is established in \cite[Proposition 2.5]{dsD}.

Working with contractions, we will use dilations to unitary operators. Normed ideals and bounded traces can also be dilated.

\begin{prop}\cite[Proposition 2.3]{dsD}
\label{dilation}
Let $\Ical$ be a normed ideal of $\BH$ with an ideal norm $\|\cdot\|_\Ical$ and a positive $\|\cdot\|_\Ical$-bounded trace $\tau_\Ical$. If $\mathcal{K}$ is a separable Hilbert space containing $\mathcal{H}$, then there are
\begin{enumerate}[(i)]
\item an ideal $\tilde\Ical$ of $\mathcal{B}(\mathcal{K})$ such that $\tilde\Ical\cap \mathcal{B}(\mathcal{H})=\Ical$,

\item an ideal norm $\|\cdot\|_{\tilde\Ical}$ on $\tilde\Ical$ whose restriction to $\Ical$ equals $\|\cdot\|_\Ical$,

\item a positive $\|\cdot\|_{\tilde\Ical}$-bounded trace $\tau_{\tilde\Ical}$ on $\tilde\Ical$ whose restriction to $\Ical$ equals $\tau_\Ical$.
\end{enumerate}
\end{prop}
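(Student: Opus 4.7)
The plan is to build $\tilde\Ical$ by transporting, through an isometric embedding, the natural $2\times 2$ ampliation of $\Ical$ from $\Bcal(\Hcal\oplus\Hcal)$ back to $\Bcal(\mathcal{K})$. Writing $\mathcal{K}=\Hcal\oplus\mathcal{L}$ with $\mathcal{L}=\mathcal{K}\ominus\Hcal$, I would fix an isometry $V:\mathcal{L}\to\Hcal$ (which exists since both spaces are separable, after reducing to the case of infinite-dimensional $\Hcal$; the finite-dimensional case is elementary) and define $U:\mathcal{K}\to\Hcal\oplus\Hcal$ by $U(x\oplus y)=(x,Vy)$, so that $U^*U=I_\mathcal{K}$ while $UU^*$ is the orthogonal projection of $\Hcal\oplus\Hcal$ onto $\Hcal\oplus V(\mathcal{L})$.

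At the ampliation level, introduce $M_2(\Ical)=\{[T_{ij}]_{i,j=1,2}\st T_{ij}\in\Ical\}\subset\Bcal(\Hcal\oplus\Hcal)$ equipped with ideal norm $\|[T_{ij}]\|_{M_2(\Ical)}=\sum_{i,j}\|T_{ij}\|_\Ical$ and trace $\hat\tau([T_{ij}])=\tau_\Ical(T_{11})+\tau_\Ical(T_{22})$. Expanding matrix products entrywise shows that $M_2(\Ical)$ is a normed ideal of $\Bcal(\Hcal\oplus\Hcal)$ and that $\hat\tau$ is a positive $\|\cdot\|_{M_2(\Ical)}$-bounded trace. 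I would then set
\begin{align*}
\tilde\Ical &= \{T\in\Bcal(\mathcal{K})\st UTU^*\in M_2(\Ical)\}, \\
\|T\|_{\tilde\Ical} &= \|UTU^*\|_{M_2(\Ical)}, \\
\tau_{\tilde\Ical}(T) &= \hat\tau(UTU^*).
\end{align*}

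All three conclusions should then follow from the identity $U^*U=I_\mathcal{K}$, which makes the map $T\mapsto UTU^*$ multiplicative on $\Bcal(\mathcal{K})$: $U(AB)U^*=(UAU^*)(UBU^*)$. Together with $\|UTU^*\|=\|T\|$ (via $T=U^*(UTU^*)U$), this transfers the ideal axioms and trace properties of $M_2(\Ical)$ to $\tilde\Ical$. For $T\in\BH$ extended by zero to $\mathcal{K}$, the formula $U^*(u,v)=u\oplus V^*v$ yields $UTU^*=\left(\begin{smallmatrix}T&0\\0&0\end{smallmatrix}\right)$, which simultaneously delivers $\tilde\Ical\cap\BH=\Ical$, the norm identity $\|T\|_{\tilde\Ical}=\|T\|_\Ical$, and the trace identity $\tau_{\tilde\Ical}(T)=\tau_\Ical(T)$.

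The step requiring the most care is the positive-ideal axiom (i) of Definition \ref{ideal}: given $0\leq T\leq S$ in $\Bcal(\mathcal{K})$ with $S\in\tilde\Ical$, I need $T\in\tilde\Ical$ with $\|T\|_{\tilde\Ical}\leq\|S\|_{\tilde\Ical}$. Isometry of $U$ gives $0\leq UTU^*\leq USU^*$ in $\Bcal(\Hcal\oplus\Hcal)$, reducing the task to the analogous axiom for $M_2(\Ical)$. That in turn should follow entrywise from axiom (i) for $\Ical$ after compressing by the projections onto the two summands; making this block-by-block reduction precise, and verifying that $M_2(\Ical)$ satisfies all three axioms of Definition \ref{ideal} under the chosen norm, is the main piece of bookkeeping in the proof.
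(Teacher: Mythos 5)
The paper offers no proof of this proposition: it is imported verbatim from \cite[Proposition 2.3]{dsD}. So there is no in-paper argument to compare against, and your proposal has to stand on its own.

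Your architecture (transport through an isometry $U:\mathcal{K}\to\Hcal\oplus\Hcal$ and an ampliation of $\Ical$) is reasonable, but the norm you put on $M_2(\Ical)$ breaks the statement. The quantity $\|[T_{ij}]\|_{M_2(\Ical)}=\sum_{i,j}\|T_{ij}\|_\Ical$ is \emph{not} an ideal norm in the sense of Definition \ref{ideal}: axiom (iii) demands $\|ABC\|\le\|A\|\,\|B\|\,\|C\|$ with constant $1$, whereas the entrywise expansion only gives constant $4$, and the loss is real. Take any rank-one projection $P\in\Ical$, $B=\left(\begin{smallmatrix}P&0\\0&0\end{smallmatrix}\right)$, $C=I$, and the unitary $A=\tfrac{1}{\sqrt2}\left(\begin{smallmatrix}I&I\\I&-I\end{smallmatrix}\right)$; then $AB=\tfrac{1}{\sqrt2}\left(\begin{smallmatrix}P&0\\P&0\end{smallmatrix}\right)$ has $\|AB\|_{M_2(\Ical)}=\sqrt2\,\|P\|_\Ical>\|A\|\,\|B\|_{M_2(\Ical)}\,\|C\|$. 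Your treatment of axiom (i) has the same defect and an additional one: for $0\le A\le B$ the compressions to the two summands control only the diagonal entries $A_{11}\le B_{11}$, $A_{22}\le B_{22}$; the off-diagonal entries of $A$ are not dominated entrywise by those of $B$, so the ``block-by-block'' reduction does not close. (Membership $A\in M_2(\Ical)$ can be rescued by Douglas factorization, writing $A=KBK^{*}$ with $K$ a contraction and using that $M_2(\Ical)$ is a two-sided ideal, but the norm inequality again only survives up to a constant.) Since the proposition asserts the existence of a genuine ideal norm whose restriction to $\Ical$ is exactly $\|\cdot\|_\Ical$, and the constant-one axioms are what feed the H\"older-type estimates downstream (e.g.\ in Theorem \ref{dthm}), this is a substantive gap rather than a cosmetic one.

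The fix also shows the $M_2$ detour is unnecessary. In the main case where $\Hcal$ is infinite dimensional, choose a unitary $W:\mathcal{K}\to\Hcal$ and set $\tilde\Ical=W^{*}\Ical W$, $\|T\|_{\tilde\Ical}=\|WTW^{*}\|_\Ical$, $\tau_{\tilde\Ical}(T)=\tau_\Ical(WTW^{*})$. All three axioms transfer with constant $1$ because conjugation is implemented by norm-one operators, and for $T\in\BH$ (extended by zero) one has $WTW^{*}=W_0TW_0^{*}$ with $W_0=W|_{\Hcal}\in\BH$ and $W_0^{*}W_0=I_{\Hcal}$, which yields $\tilde\Ical\cap\BH=\Ical$, $\|T\|_{\tilde\Ical}=\|T\|_\Ical$, and $\tau_{\tilde\Ical}(T)=\tau_\Ical(T)$ exactly. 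Finally, your reduction to infinite-dimensional $\Hcal$ is not innocuous: if $\dim\Hcal<\infty$ and $\dim\mathcal{K}=\infty$ there is no isometry $\mathcal{L}\to\Hcal$ and no unitary $\mathcal{K}\to\Hcal$, so that case requires a separate construction rather than a one-line dismissal.
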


\section{First order derivatives.}

\begin{lemma}
Let $H_0$ and $V$ be bounded operators and $p$ a natural number. Then,
\begin{align}
\label{mondif}
(H_0+V)^p-H_0^p=\sum_{\substack{p_{0},\,p_{1}\geq 0\\p_{0}+p_{1}=p-1}}(H_0+V)^{p_0}VH_0^{p_1},
\end{align}
\begin{align}
\label{monder}
\frac{d}{ds}\bigg|_{s=t}(H_0+sV)^p=\sum_{\substack{p_{0},\,p_{1}\geq 0\\p_{0}+p_{1}=p-1}}(H_0+tV)^{p_0}V(H_0+tV)^{p_1},
\end{align}
\begin{align}
\label{monder2}
\frac{d^2}{ds^2}\bigg|_{s=t}(H_0+sV)^p=
2\!\!\sum_{\substack{p_{0},\,p_{1},\,p_{2}\geq 0\\p_{0}+p_{1}+p_{2}=p-2}}\!\!
(H_0+tV)^{p_0}V(H_0+tV)^{p_1}V(H_0+tV)^{p_2},\quad p\geq 2,
\end{align}
where the derivatives exist in the operator norm.
If, in addition, $V\in\Ical$, then the derivative in \eqref{monder} exists in the ideal norm $\|\cdot\|_\Ical$.
\end{lemma}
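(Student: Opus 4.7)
My plan is to prove \eqref{mondif} first by a telescoping trick, and then obtain \eqref{monder} and \eqref{monder2} by differentiating the resulting identity. For \eqref{mondif}, I write
\[(H_0+V)^p-H_0^p=\sum_{k=0}^{p-1}\bigl[(H_0+V)^{k+1}H_0^{p-1-k}-(H_0+V)^{k}H_0^{p-k}\bigr].\]
Each bracketed term factors as $(H_0+V)^{k}\bigl[(H_0+V)-H_0\bigr]H_0^{p-1-k}=(H_0+V)^{k}V H_0^{p-1-k}$, and relabeling $p_0=k$, $p_1=p-1-k$ gives the claimed identity.

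For \eqref{monder}, I apply \eqref{mondif} with $H_0$ replaced by $H_0+tV$ and perturbation $hV$, which rearranges to the exact identity
\[\frac{(H_0+(t+h)V)^p-(H_0+tV)^p}{h}=\sum_{p_0+p_1=p-1}(H_0+(t+h)V)^{p_0}V(H_0+tV)^{p_1}.\]
Sending $h\to 0$ and using continuity of $s\mapsto(H_0+sV)^{p_0}$ in the operator norm yields \eqref{monder}. If $V\in\Ical$, then property (iii) of Definition \ref{ideal} bounds the $\|\cdot\|_\Ical$-norm of the difference between corresponding summands at $h$ and at $0$ by $\|V\|_\Ical\cdot\|(H_0+(t+h)V)^{p_0}-(H_0+tV)^{p_0}\|\cdot\|(H_0+tV)^{p_1}\|$, which tends to $0$; this upgrades the convergence to the ideal norm.

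For \eqref{monder2}, I differentiate the right-hand side of \eqref{monder} term by term using the Leibniz rule, applying \eqref{monder} inside each factor $(H_0+sV)^{p_j}$. The summand indexed by $(p_0,p_1)$ produces
\[\sum_{q_0+q_1=p_0-1}(H_0+tV)^{q_0}V(H_0+tV)^{q_1}V(H_0+tV)^{p_1}+(H_0+tV)^{p_0}V\sum_{q_0+q_1=p_1-1}(H_0+tV)^{q_0}V(H_0+tV)^{q_1}.\]
After relabeling, each ordered triple $(a,b,c)$ of non-negative integers with $a+b+c=p-2$ is produced exactly twice---once from each type of contribution---yielding the factor of $2$ in \eqref{monder2}. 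The main obstacle is this combinatorial bookkeeping together with the boundary case $p=2$, where the inner sums can be empty; both checks reduce to a short index verification once \eqref{monder} is in hand.
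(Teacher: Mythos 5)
Your proof is correct and rests on the same algebraic backbone as the paper's: everything reduces to the identity \eqref{mondif}. The differences are in how the limits are handled. For \eqref{monder}, the paper applies \eqref{mondif} a second time to the difference quotient, which yields the explicit quantitative bound $\|h(\vareps)\|_\Ical\leq p(p-1)\vareps\|V\|\|V\|_\Ical$ via the H\"older-type property of the ideal norm; you instead stop after one application and invoke continuity of $s\mapsto(H_0+sV)^{p_0}$ together with Definition \ref{ideal}(iii), which gives convergence without a rate. Both are valid; the paper's version buys an explicit $O(\vareps)$ estimate that mirrors the technique reused later (e.g., in the proof of Lemma \ref{dlemma}), while yours is slightly shorter. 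For \eqref{monder2}, the paper simply asserts that "completely analogous reasoning" (a direct expansion of the second difference quotient) works, whereas you differentiate the already-established first-derivative formula by the Leibniz rule and then count: each ordered triple $(a,b,c)$ with $a+b+c=p-2$ arises exactly once from differentiating the left power (with $p_0=a+b+1$, $p_1=c$) and once from the right power (with $p_0=a$, $p_1=b+c+1$), which correctly produces the factor $2$; the case $p=2$ gives $2V^2$ on both sides. One small point worth making explicit if you write this up: to legitimately differentiate \eqref{monder} term by term you should note that the right-hand side is an operator-valued polynomial in $t$ and that the product rule holds for operator-norm-differentiable factors, so the second derivative exists in the operator norm as claimed.
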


\begin{proof}
The representation \eqref{mondif} is routine. We will prove existence of the derivative of an operator polynomial in the ideal norm and the representation for the derivative \eqref{monder}. Existence of the derivative in the operator norm in case $V\notin\Ical$ can be established completely analogously.

Applying the representation \eqref{mondif} twice gives
\begin{align*}
&h(\vareps)=\frac{(H_0+(t+\vareps)V)^p-(H_0+tV)^p}{\vareps}-\sum_{\substack{p_{0},\,p_{1}\geq 0\\p_{0}+p_{1}=p-1}}(H_0+tV)^{p_0}V(H_0+tV)^{p_1}\\
&=\sum_{\substack{p_{0},\,p_{1}\geq 0\\p_{0}+p_{1}=p-1}}\,
\sum_{\substack{q_{0},\,q_{1}\geq 0\\q_{0}+q_{1}=p_0-1}}(H_0+(t+\vareps)V)^{q_0}V(H_0+tV)^{q_1}
V(H_0+tV)^{p_1}.
\end{align*}
The ideal norm of the latter expression is estimated by the H\"{o}lder inequality ensuring
\begin{align*}
\|h(\vareps)\|_\Ical\leq p(p-1)\vareps\|V\|\|V\|_\Ical,
\end{align*} which proves \eqref{monder} for the derivative evaluated in the ideal norm.
The completely analogous reasoning gives \eqref{monder2}.
\end{proof}

The derivative of $f$ along the direction ${\bf V}_n$ can be computed via partial derivatives along directions $V_1,\ldots,V_n$.

\begin{lemma}
\label{dlemma}
Assume Notations \ref{not} and let $f\in \Acal(\D^n_{1+\eps})$.
\begin{enumerate}[(i)]
\item
The derivative $\ds t\mapsto\frac{d}{ds}\bigg|_{s=t}f(\Xt_n(s))$ exists and is continuous on $[0,1]$ in the operator norm. Moreover, for $f$ given by \eqref{sr},
\begin{align}
\label{dfla}
\frac{d}{ds}\bigg|_{s=t}f(\Xt_n(s))=\sum_{j=1}^n D^j_f(t),
\end{align}
where
\begin{align*}
D^j_f(t)=\!\!\sum_{k_1,\ldots,k_n\geq 0}\!\!c_{k_1,\ldots,k_n}T_{k_1,\ldots,k_{j-1}}(\Xt_{j-1}(t))\,
\frac{d}{ds}\bigg|_{s=t}X_j(s)^{k_j}\,T_{k_{j+1},\ldots,k_n}(_{j+1}\Xt_n(t)).
\end{align*}
\item
Let $\Ical$ be a normed ideal.
If $B_j-A_j\in\Ical$, then $\ds t\mapsto\frac{d}{ds}\bigg|_{s=t}f(\Xt_n(s))$ exists and is continuous on $[0,1]$ in the ideal norm $\|\cdot\|_\Ical$.
\end{enumerate}
\end{lemma}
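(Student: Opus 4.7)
The plan is to differentiate the power series \eqref{sr} for $f$ term-by-term and then to justify the interchange of $d/ds$ with the infinite sum by showing that the resulting series converges uniformly on $[0,1]$. First, for a fixed multi-index $(k_1,\ldots,k_n)$, I would compute
\[
\frac{d}{ds}\bigg|_{s=t} X_1(s)^{k_1}\cdots X_n(s)^{k_n}
\]
by applying the Leibniz rule to the ordered product of the $n$ factors $X_j(s)^{k_j}$; each such factor is a polynomial in $s$ in a single operator variable, and its derivative is supplied by \eqref{monder} of the preceding lemma. This yields exactly the $j$-th summand inside $D^j_f(t)$; weighting by $c_{k_1,\ldots,k_n}$ and summing over $k$ gives \eqref{dfla} formally.

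Next I would verify the uniform convergence of the partial sums for both $f(\Xt_n(s))$ and $\sum_j D^j_f(t)$. Since each $X_j(s)$ is a contraction and $f\in\Acal(\D^n_{1+\eps})$, the multivariate Cauchy estimates on the smaller polydisc $\D^n_{1+\eps/2}$ give $|c_{k_1,\ldots,k_n}|\leq C(1+\eps/2)^{-(k_1+\cdots+k_n)}$, so $\sum_k |c_k|<\infty$ and the series for $f(\Xt_n(s))$ converges uniformly in $s\in[0,1]$. For the derivative series, \eqref{monder} together with contractivity of $X_j(t)$ gives
\[
\bigl\|\tfrac{d}{ds}\big|_{s=t} X_j(s)^{k_j}\bigr\|\leq k_j\|V_j\|,
\]
so each summand in $D^j_f(t)$ has operator norm at most $|c_k|\,k_j\,\|V_j\|$, and the majorant $\sum_k |c_k|(k_1+\cdots+k_n)$ is finite because exponential Cauchy decay absorbs the polynomial factor. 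Uniform convergence of the derivative series in operator norm then yields existence of the derivative and formula \eqref{dfla}; continuity of $t\mapsto\frac{d}{ds}|_{s=t} f(\Xt_n(s))$ follows since each summand is polynomial (hence continuous) in $t$ and the convergence is uniform on $[0,1]$.

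Part (ii) proceeds along the same scheme, with the ideal norm replacing the operator norm in the estimation of each derivative summand. The essential point is that each summand in $D^j_f(t)$ contains exactly one occurrence of $V_j\in\Ical$, with all other factors being operator-norm contractions; by Definition \ref{ideal}(iii),
\[
\bigl\|T_{k_1,\ldots,k_{j-1}}(\Xt_{j-1}(t))X_j(t)^{p_0}V_j X_j(t)^{p_1}T_{k_{j+1},\ldots,k_n}({}_{j+1}\Xt_n(t))\bigr\|_\Ical\leq\|V_j\|_\Ical.
\]
The same Cauchy-estimate majorant, with $\|V_j\|_\Ical$ in place of $\|V_j\|$, controls the derivative series in $\|\cdot\|_\Ical$, giving both existence and continuity of the derivative in the ideal norm.

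The main obstacle I expect is conceptually minor but notationally sensitive: the tuple $\Xt_n(s)$ need not lie in $\Ccal_n$ (since $\At_n$ and $\Bt_n$ may fail to commute with each other), so the Leibniz rule must be applied in the prescribed left-to-right order and no reordering of non-commuting operators is ever permitted. Because each $X_j(s)^{k_j}$ is a power of a single operator, the middle-factor expansion from \eqref{monder} slots into the $j$-th position without disturbing the surrounding order, and the argument goes through cleanly.
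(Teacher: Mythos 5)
Your proposal is correct, and it follows the paper's decomposition (termwise Leibniz differentiation of the monomials via \eqref{monder}, producing exactly the summands of $D^j_f(t)$), but it justifies the passage to the limit by a genuinely different mechanism. The paper works directly with the difference quotient: it applies \eqref{mondif} twice to each monomial, obtains the explicit error bound $\|h_{j,p_0^j,p_1^j}(\vareps)-g_{j,p_0^j,p_1^j}(0)\|_\Ical\leq n\vareps K\max_j\|V_j\|_\Ical^2(k_1+\cdots+k_n)$, and sums, which requires the second-order majorant $\sum_k|c_k|(k_1+\cdots+k_n)^2<\infty$. You instead invoke the classical theorem on differentiating a uniformly convergent series of $C^1$ Banach-space-valued functions, which needs only the first-order majorant $\sum_k|c_k|(k_1+\cdots+k_n)<\infty$ and also delivers continuity of the derivative for free (the paper handles continuity by a separate, sketched application of \eqref{mondif}). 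Both majorants converge by the Cauchy estimates you cite, so nothing is lost; your route is slightly more economical, while the paper's explicit quotient estimate is reused almost verbatim in Proposition \ref{Dixthm} and Lemma \ref{remr2}, which is presumably why the author sets it up that way. Two small points worth making explicit in a final write-up: for part (ii) the function $s\mapsto f(\Xt_n(s))$ is not itself $\Ical$-valued, so the uniform-convergence theorem should be applied to $s\mapsto f_N(\Xt_n(s))-f_N(\At_n)$, whose nonconstant coefficients all lie in $\Ical$; and the ideal-norm bound on each summand should carry the factor $K$ from Definition \ref{ideal}(ii) when the outer factors are estimated in operator norm, i.e.\ the bound is $|c_k|\,k_j\,\|V_j\|_\Ical$ up to such constants. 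Neither affects convergence.
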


\begin{proof}
We will only establish existence and continuity of $\ds t\mapsto\frac{d}{ds}\bigg|_{s=t}f(\Xt_n(s))$ in the ideal norm; the results for the operator norm can be established completely analogously.
Denote
\begin{align}
\label{gdef}
g_{j,p_0^j,p_1^j}(t)=
T_{k_1,\ldots,k_{j-1},p_0^j}(\Xt_j(t))\,V_j\,T_{p_1^j,k_{j+1},\ldots,k_n}(_{j}\Xt_n(t)),
\end{align}
so
\begin{align*}
&D_f^j(t)=\sum_{k_1,\ldots,k_n\geq 0}c_{k_1,\ldots,k_n}\!\!
\sum_{\substack{p_{0}^j,\,p_{1}^j\geq 0\\p_{0}^j+p_{1}^j=k_j-1}}\!\!g_{j,p_0^j,p_1^j}(t).
\end{align*}
We obtain
\begin{align*}
&\sup_{t\in [0,1]}\|D_f^j(t)\|_\Ical
\leq \max_{1\leq j\leq n}\|V_j\|_\Ical\sum_{k_1,\ldots,k_n\geq 0}(k_1+\ldots+k_n)\,|c_{k_1,\ldots,k_n}|,
\end{align*}
where the series converges because every partial derivative of $f$ exists in $\D^n_{1+\eps}$ and is given by the absolutely convergent series obtained by termwise differentiation of the series \eqref{sr}. In case $k_j=0$, the sum $\sum_{\substack{p_{0}^j+p_{1}^j=k_j-1}}$ is empty.

For simplicity of exposition, we will establish \eqref{dfla} only for $t=0$, as the same method works for any value of $t$.
Applying \eqref{mondif} to each term of the series representing $f(\Xt_n(t))$ gives
\begin{align}
\label{multest}
\frac{f(\Xt_n(\vareps))-f(\At_n)}{\vareps}
=\sum_{j=1}^n\sum_{k_1,\ldots,k_n\geq 0}c_{k_1,\ldots,k_n}\!\!
\sum_{\substack{p_{0}^j,\,p_{1}^j\geq 0\\p_{0}^j+p_{1}^j=k_j-1}}\!\!h_{j,p_0^j,p_1^j}(\vareps),
\end{align}
\begin{align*}
h_{j,p_0^j,p_1^j}(\vareps)=T_{k_1,\ldots,k_{j-1}}(\At_{j-1})\, X_j(\vareps)^{p_{0}^j}V_j
A_j^{p_{1}^j}\,T_{k_{j+1},\ldots,k_n}(_{j+1}\Xt_n(\vareps)).
\end{align*}
Applying \eqref{mondif} gives
\begin{align}
\label{star}
&h_{j,p_0^j,p_1^j}(\vareps)-g_{j,p_0^j,p_1^j}(0)\\
\nonumber
&=\vareps T_{k_1,\ldots,k_{j-1}}(\At_{j-1})\bigg(\!\!\sum_{\substack{q_{0}^j,\,q_{1}^j\geq 0\\q_{0}^j+q_{1}^j=p_0^j-1}}\!\!X_j(\vareps)^{q_0^j}V_jA_j^{q_1^j}V_j
A_j^{p_1^j}\,T_{k_{j+1},\ldots,k_n}(_{j+1}\Xt_n(\vareps))\\
\nonumber
&\quad+A_j^{p_0^j}V_j\sum_{i=j+1}^n
T_{p_1^j,k_{j+1},\ldots,k_{i-1}}(_{j}\At_{i-1})\!\!\!
\sum_{\substack{q_{0}^i,\,q_{1}^i\geq 0\\q_{0}^i+q_{1}^i=k_i-1}}\!\!
X_i(\vareps)^{q_0^i}V_iA_i^{q_1^i}\,T_{k_{i+1},\ldots,k_n}(_{i+1}\Xt_n(\vareps))\bigg).
\end{align}
Hence,
\begin{align}
\label{h-g}
\big\|h_{j,p_0^j,p_1^j}(\vareps)-g_{j,p_0^j,p_1^j}(0)\big\|_\Ical\leq n\vareps K\max_{1\leq j\leq n}\|V_j\|_\Ical^2\,(k_1+\ldots+k_n).
\end{align}
Combining \eqref{multest} and \eqref{h-g} gives
\begin{align*}
&\bigg\|\frac{f(\Xt_n(\vareps))-f(\At_n)}{\vareps}
-\sum_{j=1}^n D_f^j(0)\bigg\|_\Ical\\
&\quad \leq n^2\vareps K\max_{1\leq j\leq n}\|V_j\|_\Ical^2\sum_{k_1,\ldots,k_n\geq 0}(k_1+\ldots+k_n)^2\,|c_{k_1,\ldots,k_n}|.
\end{align*}
Thus, the G\^{a}teaux derivative of $f$ along the direction $(V_1,\ldots,V_n)$ at the point $(A_1,\ldots,A_n)$ exists in the norm $\|\cdot\|_\Ical$ and is given by \eqref{dfla}.

Continuity of $\ds t\mapsto\frac{d}{ds}\bigg|_{s=t}f(\Xt_n(s))$ can be established by applying \eqref{mondif} as it was done in the proof of its existence.
\end{proof}

\begin{hyp}
\label{hyp}
Let $\Ical$ be a normed ideal endowed with a positive and $\|\cdot\|_\Ical$-bounded trace $\tau_\Ical$.
Assume that $\At_n,\Bt_n\in\Ccal_n$ satisfy $B_j-A_j\in\Ical$, $1\leq j\leq n$.
\end{hyp}

Immediately from Lemma \ref{dlemma}, we have the following analog of the fundamental theorem of calculus.

\begin{cor}
\label{ftc}
Assume Notations \ref{not} and Hypotheses \ref{hyp} and let $f\in \Acal(\D^n_{1+\eps})$. Then,
\[\tau_\Ical\big(f(\Bt_n)-f(\At_n)\big)=
\int_0^1\tau_\Ical\left(\frac{d}{ds}\bigg|_{s=t}f(\Xt_n(s))\right)\,dt.\]
\end{cor}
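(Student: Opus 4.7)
The plan is to reduce the statement to the Banach-space-valued fundamental theorem of calculus applied to the path $t \mapsto f(\Xt_n(t))$, and then pull $\tau_\Ical$ inside the integral using its continuity with respect to $\|\cdot\|_\Ical$.

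First, I would invoke Lemma \ref{dlemma}(ii): since $B_j-A_j\in\Ical$ for each $j$ by Hypotheses \ref{hyp}, the derivative $t\mapsto\frac{d}{ds}\big|_{s=t}f(\Xt_n(s))$ exists and is continuous on $[0,1]$ with values in $(\Ical,\|\cdot\|_\Ical)$. Because $(\Ical,\|\cdot\|_\Ical)$ is a Banach space and the path $t\mapsto f(\Xt_n(t))$ is continuously differentiable into it, the standard Banach-space fundamental theorem of calculus gives
\begin{align*}
f(\Bt_n)-f(\At_n)=f(\Xt_n(1))-f(\Xt_n(0))=\int_0^1 \frac{d}{ds}\bigg|_{s=t}f(\Xt_n(s))\,dt,
\end{align*}
where the integral is interpreted as a Bochner (equivalently, Riemann) integral in $(\Ical,\|\cdot\|_\Ical)$; this step uses only the continuity of the integrand and the completeness of the normed ideal.

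Next, I would apply $\tau_\Ical$ to both sides. Since $\tau_\Ical$ is $\|\cdot\|_\Ical$-bounded, it is a continuous linear functional on $\Ical$, and continuous linear functionals commute with Bochner integration, yielding
\begin{align*}
\tau_\Ical\bigl(f(\Bt_n)-f(\At_n)\bigr)=\int_0^1\tau_\Ical\left(\frac{d}{ds}\bigg|_{s=t}f(\Xt_n(s))\right)dt,
\end{align*}
which is exactly the claim.

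There is no real obstacle here; Lemma \ref{dlemma} does all the heavy lifting. If one wishes to avoid Bochner integration entirely, the same conclusion can be reached more concretely: the $\|\cdot\|_\Ical$-continuity of the integrand ensures that partitions $0=t_0<\cdots<t_N=1$ of fine mesh produce Riemann sums that converge to $f(\Bt_n)-f(\At_n)$ in $\|\cdot\|_\Ical$ (by the mean value inequality), and applying the bounded functional $\tau_\Ical$ term by term reduces the identity to the scalar fundamental theorem of calculus for the continuous scalar function $t\mapsto\tau_\Ical\bigl(\frac{d}{ds}\big|_{s=t}f(\Xt_n(s))\bigr)$.
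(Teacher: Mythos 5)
Your proof is correct and matches the paper's intent: the paper simply states that the corollary follows ``immediately from Lemma \ref{dlemma}'', and the argument you spell out --- continuous differentiability of $t\mapsto f(\Xt_n(t))$ in $\|\cdot\|_\Ical$ from Lemma \ref{dlemma}(ii), the Banach-space fundamental theorem of calculus, and interchanging the $\|\cdot\|_\Ical$-bounded functional $\tau_\Ical$ with the integral --- is exactly the omitted justification (and is the same technique the paper later uses explicitly in the proof of Lemma \ref{remr2}).
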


By adjusting the argument in the proof of \cite[Lemma 5.8]{dsD}, we obtain the following simple lemma.

\begin{lemma}
\label{m2v}
Let $n_1,n_2, N_1, N_2 \in\N$.
Let $(\delta_{i_1})_{1\leq i_1\leq N_1}$ and $(\delta_{i_2})_{1\leq i_2\leq N_2}$ be partitions of $\C^{n_1}$ and $\C^{n_2}$, respectively, and let $E_1$ and $E_2$ be spectral measures on $\C^{n_1}$ and $\C^{n_2}$, respectively.
\begin{enumerate}[(i)]
\item\label{m2vi} If $V\in\Ical$, then
\[\sum_{i_1=1}^{N_1}\big|\tau_\Ical\big(E_1(\delta_{i_1})V\big)\big|
\leq\tau_\Ical\big(|\re(V)|+|\im(V)|\big).\]
\item\label{m2vii} If $V_1,V_2\in\Ical^{1/2}$, then
\[\sum_{i_1=1}^{N_1}\sum_{i_2=1}^{N_2}\big|\tau_\Ical
\big(E_1(\delta_{i_1})V_1E_2(\delta_{i_2})V_2E_1(\delta_{i_1})\big)\big|\leq \big(\tau_\Ical(|V_1|^2)\big)^{1/2}\big(\tau_\Ical(|V_2|^2)\big)^{1/2}.\]
\end{enumerate}
\end{lemma}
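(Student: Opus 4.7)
The plan is to treat the two parts separately; each one follows from a Cauchy-Schwarz-type inequality for $\tau_\Ical$ together with collapsing sums via the identity $\sum_{i_k}E_k(\delta_{i_k})=I$ on $\C^{n_k}$, which holds since $(\delta_{i_k})$ is a partition.

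For (i), I would first decompose $V=\re(V)+\i\,\im(V)$, reducing the task to bounding $\sum_{i_1}|\tau_\Ical(E_1(\delta_{i_1})W)|$ by $\tau_\Ical(|W|)$ for each self-adjoint $W\in\{\re(V),\im(V)\}$. Writing $W=W_+-W_-$ with $|W|=W_++W_-$ in the Jordan decomposition and using positivity and cyclicity of $\tau_\Ical$ applied to $W_\pm^{1/2}E_1(\delta_{i_1})W_\pm^{1/2}\geq 0$, one gets $\tau_\Ical(E_1(\delta_{i_1})W_\pm)\geq 0$, whence $|\tau_\Ical(E_1(\delta_{i_1})W)|\leq \tau_\Ical(E_1(\delta_{i_1})|W|)$. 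Summing over $i_1$ collapses the projections to the identity and produces $\tau_\Ical(|W|)$, finishing (i).

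For (ii), set $P_{i_1}:=E_1(\delta_{i_1})$, $Q_{i_2}:=E_2(\delta_{i_2})$, and use $Q_{i_2}^2=Q_{i_2}$ to rewrite each summand as $\tau_\Ical(A_{i_1,i_2}^*B_{i_1,i_2})$, where $A_{i_1,i_2}=Q_{i_2}V_1^*P_{i_1}$ and $B_{i_1,i_2}=Q_{i_2}V_2P_{i_1}$. Applying the Cauchy-Schwarz inequality for $\tau_\Ical$ on $\Ical^{1/2}$ recalled in the preliminaries yields the bound $\tau_\Ical(P_{i_1}V_1Q_{i_2}V_1^*P_{i_1})^{1/2}\tau_\Ical(P_{i_1}V_2^*Q_{i_2}V_2P_{i_1})^{1/2}$. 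A second Cauchy-Schwarz inequality applied to the finite sum over $(i_1,i_2)$ reduces the problem to estimating the two unsquared sums; on each of these, collapsing the inner sum over $i_2$ using $\sum_{i_2}Q_{i_2}=I$ and then invoking cyclicity of $\tau_\Ical$ together with $\sum_{i_1}P_{i_1}=I$ produces $\tau_\Ical(|V_1|^2)$ and $\tau_\Ical(|V_2|^2)$ respectively, yielding the stated bound.

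The main hurdle is bookkeeping rather than technique: one must track that, although the projections $P_{i_1},Q_{i_2}$ are merely bounded, the hypothesis $V_1,V_2\in\Ical^{1/2}$ ensures each factor like $Q_{i_2}V_k P_{i_1}$ lies in $\Ical^{1/2}$ and products of two such operators land in $\Ical$, where the Cauchy-Schwarz inequality and cyclicity of $\tau_\Ical$ are available. Once this is tracked, the argument parallels that of \cite[Lemma 5.8]{dsD}, the only adjustment being that the spectral projections now correspond to Borel subsets of $\C^{n_k}$ rather than of $\C$.
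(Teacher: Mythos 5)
Your argument is correct: part (i) via the decomposition $V=\re(V)+\i\,\im(V)$, the Jordan decomposition $W=W_+-W_-$, positivity of $\tau_\Ical$ on $E_1(\delta_{i_1})W_\pm$ (after conjugation), and collapsing $\sum_{i_1}E_1(\delta_{i_1})=I$; part (ii) via the factorization through $Q_{i_2}^2=Q_{i_2}$, the trace Cauchy--Schwarz inequality on each summand, a second Cauchy--Schwarz over the finite index set, and the resolution of the identity together with $\tau_\Ical(V_kV_k^*)=\tau_\Ical(V_k^*V_k)$. The paper itself supplies no details, stating only that the lemma follows "by adjusting the argument in the proof of \cite[Lemma 5.8]{dsD}," and your proof is exactly the intended adjustment, so it matches the paper's approach.
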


Under an additional commutativity assumption, the trace of a derivative of a multivariate operator function can be written via partial derivatives of $f$.

\begin{thm}
\label{dthm}
Assume Notations \ref{not} and Hypotheses \ref{hyp}.
Suppose that there exists $t\in [0,1]$ such that $\Xt_n(t)\in\Ccal_n$ satisfies \eqref{dilfla}.
Then, for every $f\in \Acal(\D_{1+\eps}^n)$, $\eps>0$,
\begin{align}
\label{chain}
\tau_\Ical\left(\frac{d}{ds}\bigg|_{s=t}f(\Xt_n(s))\right)
=\sum_{j=1}^n\tau_\Ical\left(V_j\,\frac{\partial{f}}{\partial{z_j}}(\Xt_n(t))\right)
\end{align}
and
\begin{align}
\label{pdest}
\left|\tau_\Ical\left(V_j\,\frac{\partial{f}}{\partial{z_j}}(\Xt_n(t))\right)\right|\leq \min\big\{\|\tau_\Ical\|_{\Ical^*}\cdot\|V_j\|_\Ical,\,\tau_\Ical\big(|\re(V_j)|+|\im(V_j)|\big)\big\} \left\|\frac{\partial{f}}{\partial{z_j}}\right\|_{L^\infty(\Omega^n)},
\end{align}
where $\Omega=\T$. If, in addition, $\Xt_n(t)$ is a tuple of self-adjoint contractions, then \eqref{pdest} holds with $\Omega=[-1,1]$ for $f\in \Acal((-1-\eps,1+\eps)^n)$, $\eps>0$.
\end{thm}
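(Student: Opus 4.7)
The plan is to expand $\frac{d}{ds}\big|_{s=t}f(\Xt_n(s))$ via Lemma \ref{dlemma} into $\sum_j D_f^j(t)$, use trace cyclicity together with the fact that $\Xt_n(t)$ is a commuting tuple to identify $\tau_\Ical(D_f^j(t))$ with $\tau_\Ical\bigl(V_j(\partial f/\partial z_j)(\Xt_n(t))\bigr)$, and then bound each such term by two independent routes: a direct ideal-norm estimate via the von Neumann inequality \eqref{vNineq}, and a sharper estimate via the joint spectral measure of a normal dilation combined with Lemma \ref{m2v}\eqref{m2vi}.

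For \eqref{chain} the key observation is that although $V_j$ need not commute with $X_j(t)$, every monomial entering $D_f^j(t)$ has the form $T_{k_1,\ldots,k_{j-1}}(\Xt_{j-1}(t))\,X_j(t)^{p_0}V_j X_j(t)^{p_1}\,T_{k_{j+1},\ldots,k_n}({}_{j+1}\Xt_n(t))$, and under $\tau_\Ical$ cyclicity lets me move $X_j(t)^{p_0}$ to the far right and then slide it past the outer tuple monomials \emph{using only the commutativity among the $X_i(t)$'s}. Combining the two powers of $X_j(t)$ and summing over $p_0+p_1=k_j-1$ yields $k_j\,\tau_\Ical\!\bigl(V_j X_j(t)^{k_j-1}\prod_{i\neq j}X_i(t)^{k_i}\bigr)$, which is exactly the $(k_1,\ldots,k_n)$-term of $\tau_\Ical\bigl(V_j(\partial f/\partial z_j)(\Xt_n(t))\bigr)$. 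All series manipulations are legitimate because $f\in\AD$ forces absolute convergence of $\sum|c_{k_1,\ldots,k_n}|(k_1+\cdots+k_n)$, which is what was already used in Lemma \ref{dlemma}.

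For the first alternative in the minimum on the right of \eqref{pdest} I combine $|\tau_\Ical(\cdot)|\le\|\tau_\Ical\|_{\Ical^*}\|\cdot\|_\Ical$, the ideal property $\|V_jB\|_\Ical\le\|V_j\|_\Ical\|B\|$, and the von Neumann inequality \eqref{vNineq} (available because $\Xt_n(t)$ admits a commuting contractive normal dilation) applied to $\partial f/\partial z_j\in\AD$. For the second alternative I dilate: take $\Ut_n$ on $\mathcal{K}\supset\mathcal{H}$ satisfying \eqref{dilfla}, extend $(\Ical,\tau_\Ical)$ to $(\tilde\Ical,\tau_{\tilde\Ical})$ via Proposition \ref{dilation}, view $V_j$ as $V_j\oplus 0\in\tilde\Ical$, and use \eqref{dilfla} (extended from monomials to $\AD$ by absolute convergence) together with cyclicity of $\tau_{\tilde\Ical}$ to obtain $\tau_\Ical\bigl(V_j(\partial f/\partial z_j)(\Xt_n(t))\bigr)=\tau_{\tilde\Ical}\bigl(V_j(\partial f/\partial z_j)(\Ut_n)\bigr)$. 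Writing $(\partial f/\partial z_j)(\Ut_n)=\int(\partial f/\partial z_j)\,dE$ against the joint spectral measure $E$ of $\Ut_n$ and approximating by Riemann sums $\sum_i(\partial f/\partial z_j)(z_i)E(\delta_i)$ in operator norm, Lemma \ref{m2v}\eqref{m2vi} yields $\sum_i|\tau_{\tilde\Ical}(V_jE(\delta_i))|\le\tau_\Ical(|\re V_j|+|\im V_j|)$, while the polydisc maximum principle replaces $\|\partial f/\partial z_j\|_{L^\infty(\overline{\D}^n)}$ by $\|\partial f/\partial z_j\|_{L^\infty(\T^n)}$. The self-adjoint case is the same argument with $\Xt_n(t)$ itself in place of $\Ut_n$ and its joint spectral measure on $[-1,1]^n$; no dilation is needed.

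I expect the main technical obstacle to be the dilation/extended-trace bookkeeping of the second bound: extending \eqref{dilfla} from monomials to $(\partial f/\partial z_j)(\Xt_n(t))=P_\mathcal{H}(\partial f/\partial z_j)(\Ut_n)|_\mathcal{H}$, verifying that after embedding $V_j\mapsto V_j\oplus 0$ cyclicity of $\tau_{\tilde\Ical}$ absorbs the compressions $P_\mathcal{H}$, and confirming that operator-norm convergence of the spectral Riemann sums is strong enough to pass to the limit in the finite bound from Lemma \ref{m2v}. The chain rule \eqref{chain} itself is routine once the cyclicity-plus-commutativity manoeuvre above is in place.
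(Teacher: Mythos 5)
Your proposal is correct and follows essentially the same route as the paper: Lemma \ref{dlemma} plus trace cyclicity and commutativity of the $X_i(t)$ for \eqref{chain}, the H\"older/von Neumann route for the first bound in the minimum, and the dilation to a commuting normal tuple combined with joint spectral Riemann sums and Lemma \ref{m2v}\eqref{m2vi} for the second. The only cosmetic difference is that the paper first proves the spectral estimate for normal tuples and then reduces the general case by dilation, whereas you dilate first; the bookkeeping you flag (absorbing $P_{\mathcal{H}}$ via cyclicity since $V_j=P_{\mathcal{H}}V_jP_{\mathcal{H}}$) is handled the same way in the paper.
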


\begin{proof}
We provide the proof only in case $\Omega=\T$; the proof in case $\Omega=[-1,1]$ is a verbatim repetition.

Applying Lemma \ref{dlemma} gives
\begin{align*}
\tau_\Ical\left(\frac{d}{ds}\bigg|_{s=t}f(\Xt_n(s))\right)=\sum_{j=1}^n\sum_{k_1,\ldots,k_n\geq 0}c_{k_1,\ldots,k_n}\!\!\sum_{\substack{p_{0}^j,\,p_{1}^j\geq 0\\p_{0}^j+p_{1}^j=k_j-1}}\!\!\tau_\Ical\left(g_{j,p_0^j,p_1^j}(t)\right),
\end{align*}
where $g_{j,p_0^j,p_1^j}$ is defined by \eqref{gdef}.
Cyclicity of the trace and pairwise commutativity of $X_1(t),\ldots,X_n(t)$ give
\begin{align*}
\tau_\Ical\left(g_{j,p_0^j,p_1^j}(t)\right)=\tau_\Ical\left(V_j\,
T_{k_1,\ldots,k_{j-1},k_j-1,k_{j+1},\ldots,k_n}(\Xt_n(t))\right).
\end{align*}
Thus,
\begin{multline*}
\tau_\Ical\left(\frac{d}{ds}\bigg|_{s=t}f(\Xt_n(s))\right)
=\sum_{j=1}^n\sum_{k_1,\ldots,k_n\geq 0}k_j c_{k_1,\ldots,k_n}
\tau_\Ical\left(V_j\,T_{k_1,\ldots,k_{j-1},k_j-1,k_{j+1},\ldots,k_n}(\Xt_n(t))\right),
\end{multline*}
which equals \eqref{chain}. By the H\"{o}lder and von Neumann inequalities,
\[\left|\tau_\Ical\left(V_j\,\frac{\partial{f}}{\partial{z_j}}(\Xt_n(t))\right)\right|\leq \|\tau_\Ical\|_{\Ical^*}\cdot \|V_j\|_\Ical \left\|\frac{\partial{f}}{\partial{z_j}}\right\|_{L^\infty(\T^n)}.\]
Now we will complete the proof of \eqref{pdest}.

We suppose first that $\Xt_n(t)$ is a tuple of normal contractions and $E_t$ is its joint spectral measure. By the spectral theorem,
\begin{align*}
\frac{\partial f}{\partial z_j}(\Xt_n(t))
&=\int_{\C^n}\frac{\partial f}{\partial z_j}(z_1,\ldots,z_n)\,dE_t(z_1,\ldots,z_n).
\end{align*}
Then, for every $1\leq l\leq n$, there is a sequence of Borel partitions $(\delta_{m,l,\beta_l})_{1\leq \beta_l\leq m}$ of $\C$ and a sequence of tuples of complex numbers $(z_{m,l,\beta_l})_{1\leq \beta_l\leq m}$ such that
\begin{align*}
&\tau_\Ical\left(V_j\frac{\partial f}{\partial z_j}(\Xt_n(t))\right)\\
&\quad=\lim_{m\rightarrow\infty}\sum_{1\leq \beta_1,\ldots,\beta_n\leq m}
\frac{\partial f}{\partial z_j}(z_{m,1,\beta_1},\ldots,z_{m,n,\beta_n})
\tau_\Ical\big(E_t(\delta_{m,1,\beta_1}\times\ldots\times\delta_{m,n,\beta_n})V_j\big).
\end{align*}
Applying Lemma \ref{m2v}\eqref{m2vi} concludes the proof of \eqref{pdest} in case of normal contractions.

Let now $\Xt_n(t)\in\Ccal_n$. Since there exists a tuple of normal contractions $\Ut_n$ on a Hilbert space $\mathcal{K}\supset\mathcal{H}$ such that \eqref{dilfla} holds, we have
\begin{align*}
\left|\tau_\Ical\left(V_j\frac{\partial f}{\partial z_j}(\Xt_n(t))\right)\right|
=\left|\tau_{\tilde\Ical}\left(P_{\mathcal{H}}\frac{\partial f}{\partial z_j}(\Ut_n)V_j\right)\right|
\leq \left|\tau_{\tilde\Ical}\left(\frac{\partial f}{\partial z_j}(\Ut_n)V_j\right)\right|,
\end{align*}
for $\tau_{\tilde\Ical}$ given by Proposition \ref{dilation}. Hence,
\eqref{pdest} follows by the just established estimate for a tuple of normal contractions.
\end{proof}

\begin{remark}
\label{spremark}
Combining the result of Lemma \ref{dlemma} on existence of derivatives of multivariate functions with the results of \cite[Corollary]{KPSS} and \cite[Theorem 5.2]{CMPS} on operator Lipschitzness of scalar functions, we conclude the following. Assume that $\At_n$ and $\Vt_n$ are paths of self-adjoint contractions. If $V_j\in S^p$, $1<p<\infty$, $1\leq j\leq n$, and if $\Xt_n(s)=\At_n+s\Vt_n\in\Ccal_n$, $s\in[0,1]$, then
\begin{align*}
\left\|\frac{d}{ds}\bigg|_{s=0}f(\Xt_n(s))\right\|_p\leq \frac{C_n\, p^2}{p-1}\cdot\sup_{\vec x, \vec y \in \cup_{t\in [0,1]}\sigma(\Xt_n(t))}\!\frac{|f(\vec x)-f(\vec y)|}{\|\vec x-\vec y\|_1}\cdot\sum_{j=1}^n\|V_j\|_p.
\end{align*}
\end{remark}




\begin{lemma}
\label{pathperturb}
The following statements are equivalent.
\begin{enumerate}[(i)]
\item\label{(i)} $(A_1+t(B_1-A_1),\ldots,A_n+t(B_n-A_n))\in\Ccal_n$, for every $t\in[0,1]$, $1\leq j\leq n$.
\item\label{(ii)} $\At_n,\Bt_n\in\Ccal_n$ and $[B_i-A_i,B_j-A_j]=0$, for all $1\leq i, j\leq n$.
\end{enumerate}
\end{lemma}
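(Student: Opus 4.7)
The plan is to exploit the fact that, for any indices $i,j$ and any $t\in\R$, the commutator of the two linear paths $X_i(t)=A_i+tV_i$ and $X_j(t)=A_j+tV_j$ expands as a polynomial of degree at most two in $t$ with operator coefficients:
\begin{align*}
[X_i(t),X_j(t)]=[A_i,A_j]+t\bigl([A_i,V_j]+[V_i,A_j]\bigr)+t^2[V_i,V_j].
\end{align*}
Since $X_j(t)=(1-t)A_j+tB_j$ is a convex combination of contractions whenever $A_j$ and $B_j$ are contractions, the contractivity part of membership in $\Ccal_n$ is automatic along the segment; only pairwise commutativity carries content.

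For the implication \eqref{(i)}$\Rightarrow$\eqref{(ii)}, I would first evaluate at $t=0$ and $t=1$ to obtain $\At_n,\Bt_n\in\Ccal_n$. For each fixed pair $(i,j)$, the displayed polynomial vanishes identically on $[0,1]$, hence is the zero polynomial, so all three operator-valued coefficients must vanish; in particular, the coefficient of $t^2$ yields $[V_i,V_j]=0$.

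For the converse \eqref{(ii)}$\Rightarrow$\eqref{(i)}, the constant and quadratic coefficients in the display are zero by hypothesis. To handle the linear coefficient, I would expand
\begin{align*}
[V_i,V_j]=[B_i,B_j]-[B_i,A_j]-[A_i,B_j]+[A_i,A_j],
\end{align*}
and use $[A_i,A_j]=0=[B_i,B_j]$ to rewrite the linear coefficient as
$[A_i,V_j]+[V_i,A_j]=[A_i,B_j]+[B_i,A_j]=-[V_i,V_j]=0$.
Thus $[X_i(t),X_j(t)]=0$ for every $t\in[0,1]$, which together with the contractivity observation gives \eqref{(i)}. There is no substantive obstacle: the entire argument is bookkeeping with the polynomial identity above, combined with the elementary fact that a polynomial vanishing on a nontrivial interval is identically zero.
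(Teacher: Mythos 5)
Your proof is correct and follows essentially the same route as the paper: both expand the commutator $[X_i(t),X_j(t)]$ as a quadratic polynomial in $t$ and observe that its vanishing on $[0,1]$ is equivalent to the vanishing of all its coefficients, which in turn is equivalent to $[A_i,A_j]=[B_i,B_j]=[A_i,B_j]+[B_i,A_j]=0$. The only cosmetic difference is that you expand in the monomial basis $1,t,t^2$ while the paper uses the basis $(1-t)^2,\,t(1-t),\,t^2$; your explicit remark that contractivity along the segment is automatic by convexity matches the paper's note following Notation \ref{not}.
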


\begin{proof}
Let $h_{ij}(t)=[A_i+t(B_i-A_i),A_j+t(B_j-A_j)]$. 
It is straightforward to see that
\begin{align}
\label{hij1}
h_{ij}(t)=(1-t)^2[A_i,A_j]+t^2[B_i,B_j]+t(1-t)[A_i,B_j]+t(1-t)[B_i,A_j].
\end{align}
Note that \eqref{(i)} is equivalent to $h_{ij}(t)=0$, for every $t\in[0,1]$, $1\leq j\leq n$. Thus, \eqref{(ii)} immediately implies \eqref{(i)}. Since \eqref{(ii)} is equivalent to $[A_i,A_j]=[B_i,B_j]=[A_i,B_j]+[B_i,A_j]=0$, $1\leq i\neq j\leq n$,
by differentiating \eqref{hij1} with respect to $t$ twice and considering $h_{ij}(0)$ and $h_{ij}(1)$, we see that \eqref{(i)} implies \eqref{(ii)}.
\end{proof}

\begin{thm}
\label{trthm}
Assume Notations \ref{not}, Hypotheses \ref{hyp}, and $[V_i,V_j]=0$, for $1\leq i\neq j\leq n$. If $\{\At_n,\Bt_n\}\in\Ncal_n$,
then there exist finite measures $\mu_1,\ldots,\mu_n$ on $\Omega^n$ such that for every $1\leq j\leq n$,
\begin{align}
\label{min}
\|\mu_j\|\leq \min\big\{\|\tau_\Ical\|_{\Ical^*}\cdot\|V_j\|_\Ical,\,\tau_\Ical\big(|\re(V_j)|+|\im(V_j)|\big)\big\}
\end{align}
\begin{align}
\label{mutau}
\mu_j(\Omega^n)=\tau_\Ical(V_j),
\end{align}
and
\begin{align}
\label{tf}
\tau_\Ical\big(f(\Bt_n)-f(\At_n)\big)
=\sum_{j=1}^n\int_{\Omega^n}\frac{\partial f }{\partial z_j}(z_1,\ldots,z_n)\,d\mu_j(z_1,\ldots,z_n),
\end{align}
for every $f\in\AD$, where $\Omega=\T$.

For every $1\leq j\leq n$ and $g\in\Acal(\D_{1+\eps})$,
\begin{align}
\label{connection1}
\int_{\Omega^n}g(z_j)\,d\mu_j(z_1,\ldots,z_n)=\int_{\Omega}g(z)\,d\mu_{A_j,B_j}(z),
\end{align}
where $\mu_{A_j,B_j}$ is the measure given by Theorem \ref{dsmt1} for the pair $(A_j,B_j)$.

If, in addition, $\At_n$ and $\Bt_n$ are tuples of self-adjoint contractions, then there exist  real-valued measures $\mu_j$, $1\leq j\leq n$, such that \eqref{min}--\eqref{connection1} hold with $\Omega=[-1,1]$ for $f(z_1,\ldots,z_n)\in \Acal((-1-\eps,1+\eps)^n)$ and $g\in\Acal((-1-\eps,1+\eps))$, $\eps>0$.
\end{thm}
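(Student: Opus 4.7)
My plan is to combine the fundamental theorem of calculus (Corollary \ref{ftc}) with the chain-rule identity and estimate of Theorem \ref{dthm}, and then convert the resulting bounded linear functional into a measure by Hahn--Banach plus Riesz. The hypothesis $[V_i,V_j]=0$ together with $\At_n,\Bt_n\in\Ccal_n$ puts the pair in $\Ccal_n$ all along the linear path via Lemma \ref{pathperturb}; combined with $\{\At_n,\Bt_n\}\in\Ncal_n$, every $\Xt_n(t)$ satisfies \eqref{dilfla}, so Theorem \ref{dthm} applies uniformly in $t\in[0,1]$. Chaining Corollary \ref{ftc} with \eqref{chain} yields, for every $f\in\Acal(\D_{1+\eps}^n)$,
\begin{equation*}
\tau_\Ical\bigl(f(\Bt_n)-f(\At_n)\bigr)=\sum_{j=1}^n\int_0^1\tau_\Ical\!\left(V_j\,\frac{\partial f}{\partial z_j}(\Xt_n(t))\right)dt.
\end{equation*}
Continuity of the integrand in $t$ (which follows as in Lemma \ref{dlemma}) ensures the integral is well-defined.

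For each $1\le j\le n$, I introduce the linear functional $\Psi_j$ on the restriction of $\Acal(\D_{1+\eps}^n)$ to $\T^n$, viewed as a subspace of $C(\T^n)$, by
\begin{equation*}
\Psi_j(g)=\int_0^1\tau_\Ical\bigl(V_j\,g(\Xt_n(t))\bigr)\,dt.
\end{equation*}
The very argument used in the proof of \eqref{pdest}---spectral decomposition of $g(\Ut_n(t))$ on a commuting normal dilation via Proposition \ref{dilation}, followed by Lemma \ref{m2v}\eqref{m2vi}---applies verbatim with $g$ in place of $\partial_j f$ and delivers
\begin{equation*}
|\Psi_j(g)|\le C_j\,\|g\|_{L^\infty(\T^n)},\qquad C_j:=\min\bigl\{\|\tau_\Ical\|_{\Ical^*}\|V_j\|_\Ical,\,\tau_\Ical(|\re(V_j)|+|\im(V_j)|)\bigr\}.
\end{equation*}
By Hahn--Banach, $\Psi_j$ extends to a continuous linear functional on $C(\T^n)$ of norm at most $C_j$, and by the Riesz representation theorem there exists a complex Borel measure $\mu_j$ on $\T^n$ with $\|\mu_j\|\le C_j$ representing the extension. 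Specializing $g=\partial_j f$ and summing over $j$ yields the trace formula \eqref{tf}, establishing \eqref{min}. Plugging in the constant function $g\equiv 1$ gives $\mu_j(\T^n)=\Psi_j(1)=\tau_\Ical(V_j)$, proving \eqref{mutau}. For \eqref{connection1}, let $g\in\Acal(\D_{1+\eps})$ depend only on $z_j$: the mixed-variable trace then collapses by commutativity to $\Psi_j(g)=\int_0^1\tau_\Ical\bigl(V_j\,g(A_j+tV_j)\bigr)\,dt$, which is exactly the functional whose Hahn--Banach/Riesz representation furnishes the measure $\mu_{A_j,B_j}$ in Theorem \ref{dsmt1}.

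The self-adjoint case is obtained by repeating the construction on $[-1,1]^n$ in place of $\T^n$, using the real analog of \eqref{pdest} supplied at the end of Theorem \ref{dthm}. A decisive simplification here is Stone--Weierstrass: the polynomial algebra is dense in $C([-1,1]^n)$, so the resulting measure is uniquely determined; since $\Psi_j$ takes real values on real polynomials in self-adjoint variables (as $V_j$ is self-adjoint and $\tau_\Ical$ is real on self-adjoint arguments), the measure is automatically real-valued. The chief subtle point in the general case is precisely the absence of such density: the polydisc algebra is not dense in $C(\T^n)$, so $\mu_j$ is not uniquely determined and the identities \eqref{tf} and \eqref{connection1} are only asserted on holomorphic test functions; nevertheless, existence with the sharp norm bound is what Hahn--Banach provides, and this is all the theorem requires.
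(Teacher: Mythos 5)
Your proposal is correct and follows essentially the same route as the paper: the identity from Corollary \ref{ftc} combined with \eqref{chain}, the uniform bound \eqref{pdest} (valid for arbitrary analytic $g$ in place of $\partial f/\partial z_j$), and then Hahn--Banach plus Riesz--Markov to produce $\mu_j$, with the test functions $z_j$ and antiderivatives of $g$ yielding \eqref{mutau} and \eqref{connection1}. Your added remarks on well-definedness on $\T^n$, the Stone--Weierstrass uniqueness in the self-adjoint case, and the non-uniqueness on the torus are accurate refinements of the same argument.
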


\begin{proof}
Combining the results of Corollary \ref{ftc} and Theorem \ref{dthm}, we obtain
\begin{align}
\label{tinside}
\tau_\Ical\big(f(\Bt_n)-f(\At_n)\big)
=\sum_{j=1}^n\int_0^1\tau_\Ical\left(V_j\,
\frac{\partial{f}}{\partial{z_j}}(\Xt_n(t))\right)dt.
\end{align}
By the Riesz-Markov representation theorem for a bounded linear functional on the space of continuous functions on a compact set and by the Hahn-Banach theorem, we deduce from \eqref{tinside} and \eqref{pdest} existence of measures $\mu_j$ satisfying \eqref{min} and \eqref{tf}. Applying \eqref{tf} to $f(z_1,\ldots,z_n)=z_j$, $1\leq j \leq n$, gives \eqref{mutau} and applying \eqref{tf} to $f(z_1,\ldots,z_n)=f(z_j)$ such that $\frac{\partial f}{\partial z_j}=g$ gives \eqref{connection1}, $1\leq j \leq n$.
\end{proof}

\begin{cor}
\label{arem1}
Let $(A_1,A_2),(B_1,B_2)$ be tuples of commuting self-adjoint operators such that $\|A_1+\i A_2\|\leq 1$, $\|B_1+\i B_2\|\leq 1$, $V_1=B_1-A_1, V_2=B_2-A_2\in\Ical$, and $[V_1,V_2]=0$. Then, there exist real-valued measures $\mu_1,\mu_2$ satisfying \eqref{min}--\eqref{connection1} with $\Omega=[-1,1]$ and supported in the closed unit disc $\overline\D$.
\begin{enumerate}[(i)]
\item \label{arem1ii} For $f\in\Acal(\D_{1+\eps})$, $\eps>0$,
\[\tau_\Ical\big(f(B_1+\i B_2)-f(A_1+\i A_2)\big)=\int_{\overline\D}f'(z)\,d(\mu_1+\i\mu_2)(z).\]

\item \label{arem1iii}

If $A_i=B_i$ for $i\in\{1,2\}$, then $\mu_i=0$ and, for $g\in\Acal((-1-\eps,1+\eps))$, $\eps>0$,
\begin{align}
\label{con}
\int_{\overline\D}g(x_i)\,d\mu_j(x_1,x_2)=\tau\big(g(A_i)V_j\big).
\end{align}
\end{enumerate}
\end{cor}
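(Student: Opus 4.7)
The plan is to first verify the hypotheses of Theorem \ref{trthm} for the pair $(A_1,A_2),(B_1,B_2)$ to obtain bivariate measures $\mu_1,\mu_2$ on $[-1,1]^2$, then to translate part (i) into a single-variable statement by polynomial approximation under the identification $z=x_1+\i x_2$, and finally to derive part (ii) by plugging a particular test polynomial into the bivariate trace formula. The hypothesis check is routine: by assumption both endpoints are commuting self-adjoint pairs; the identity $X_1(t)+\i X_2(t)=(1-t)(A_1+\i A_2)+t(B_1+\i B_2)$ combined with the triangle inequality yields $\|X_1(t)+\i X_2(t)\|\le 1$; and Lemma \ref{pathperturb} makes $\Xt_2(t)$ a commuting pair of self-adjoint contractions. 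Hence $\Xt_2(t)$ serves as its own normal contractive dilation, $\{(A_1,A_2),(B_1,B_2)\}\in\Ncal_2$, and Theorem \ref{trthm} yields real-valued $\mu_1,\mu_2$ on $[-1,1]^2$ satisfying \eqref{min}--\eqref{connection1}. To confine their supports to $\overline{\D}$, I would use that each $\mu_j$ represents the bounded functional $g\mapsto\int_0^1\tau_\Ical(V_j\,g(\Xt_2(t)))\,dt$ on $C([-1,1]^2)$; since the joint spectrum of every $\Xt_2(t)$ lies in $\overline{\D}$, any continuous $g$ vanishing there gives $g(\Xt_2(t))=0$ and therefore annihilates the functional.

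For part (i), let $f_N(z)=\sum_{k\le N}c_k z^k$ be the $N$-th partial sum of $f$ and set $F_N(x_1,x_2)=f_N(x_1+\i x_2)$. Since $F_N$ is a polynomial, \eqref{tf} applies to it directly, and by the equivalence of functional calculi for commuting normal pairs, $F_N(A_1,A_2)=f_N(A_1+\i A_2)$ and likewise for $B$. With $\partial F_N/\partial x_1=f_N'(x_1+\i x_2)$ and $\partial F_N/\partial x_2=\i f_N'(x_1+\i x_2)$, the bivariate formula collapses to
\[
\tau_\Ical\big(f_N(B_1+\i B_2)-f_N(A_1+\i A_2)\big)=\int_{\overline{\D}}f_N'(z)\,d(\mu_1+\i\mu_2)(z).
\]
Letting $N\to\infty$, uniform convergence $f_N\to f$ and $f_N'\to f'$ on $\overline{\D}$, combined with the von Neumann inequality for the normal contractions $A_1+\i A_2$ and $B_1+\i B_2$, passes both sides to the limit.

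For part (ii), assume $V_i=0$; then \eqref{min} forces $\mu_i=0$. For $g\in\Acal((-1-\eps,1+\eps))$, I would apply \eqref{tf} to $F(x_1,x_2)=g(x_i)\,x_j$, which lies in $\Acal((-1-\eps',1+\eps')^2)$ for any $\eps'<\eps$ and whose partials are $\partial F/\partial x_j=g(x_i)$ and $\partial F/\partial x_i=g'(x_i)\,x_j$. This reduces \eqref{tf} to $\tau_\Ical\big(F(B_1,B_2)-F(A_1,A_2)\big)=\int g(x_i)\,d\mu_j$. Because $A_i=B_i$ and each of $(A_1,A_2),(B_1,B_2)$ commutes, $g(A_i)$ commutes with both $A_j$ and $B_j$, so the left side equals $\tau_\Ical(g(A_i)V_j)$, giving \eqref{con}. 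The main subtlety I anticipate is the mismatch between bidisc-analyticity (required by Theorem \ref{trthm}) and disc-analyticity (as in part (i)); the polynomial approximation above is what bridges this gap.
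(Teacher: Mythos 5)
Your proposal is correct and follows essentially the same route as the paper, whose own proof is only three sentences long: the support claim is obtained by inspecting the proof of Theorem \ref{trthm}, part (i) is declared ``straightforward,'' and part (ii) is obtained by applying \eqref{tf} to $f(x_1,x_2)=g(x_i)x_j$. Your verification that $\{(A_1,A_2),(B_1,B_2)\}\in\Ncal_2$ (self-adjoint commuting contractions are their own normal dilations), your localization of the representing functional to the joint spectra inside $\overline\D$, your polynomial approximation in (i), and your computation in (ii) are exactly the details the paper leaves implicit.

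One imprecision in part (i): to pass $\tau_\Ical\big(f_N(B_1+\i B_2)-f_N(A_1+\i A_2)\big)$ to the limit you invoke the von Neumann inequality, which only yields operator-norm convergence; but $\tau_\Ical$ is merely assumed $\|\cdot\|_\Ical$-bounded and need not be operator-norm continuous (e.g., a Dixmier trace). The correct justification is convergence of the \emph{difference} in the ideal norm: writing $V=V_1+\i V_2\in\Ical$ and using \eqref{mondif} together with Definition \ref{ideal}(iii), one gets
\begin{align*}
\big\|\big(f(B_1+\i B_2)-f(A_1+\i A_2)\big)-\big(f_N(B_1+\i B_2)-f_N(A_1+\i A_2)\big)\big\|_\Ical
\leq \sum_{k>N}k\,|c_k|\,\|V\|_\Ical\rightarrow 0,
\end{align*}
which is the same estimate underlying Lemma \ref{dlemma}. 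With that substitution your argument is complete and matches the paper's.
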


\begin{proof}
Examination of the proof of Theorem \ref{trthm} shows that $\mu_1$ and $\mu_2$ satisfying \eqref{min}--\eqref{connection1} can be chosen to be supported in a compact subset of $\overline{\D}$ containing $\cup_{t\in [0,1]}\sigma(\Xt_2(t))$. The property \eqref{arem1ii} is straightforward.
The representation \eqref{con} follows from \eqref{tf} applied to $f(x_1,x_2)=g(x_i)x_j$, $1\leq i\neq j\leq 2$.
\end{proof}

Below, we relax commutativity assumptions of Theorem \ref{trthm} in case $\tau_\Ical$ is a singular trace.

\begin{prop}
\label{Dixthm}
Suppose $\tau_\Ical(\Ical^2)=\{0\}$ and assume Notations \ref{not} and Hypotheses \ref{hyp}. If $\At_n$ satisfies \eqref{dilfla}, then there exist finite measures $\mu_1,\ldots,\mu_n$ on $\T^n$ such that \eqref{min}--\eqref{connection1} hold.
\end{prop}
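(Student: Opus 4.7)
The plan is to establish the identity
\[\tau_\Ical(f(\Bt_n) - f(\At_n)) = \sum_{j=1}^n \tau_\Ical\left(V_j \frac{\partial f}{\partial z_j}(\At_n)\right)\]
for every $f \in \Acal(\D_{1+\eps}^n)$ directly from the hypothesis $\tau_\Ical(\Ical^2) = \{0\}$, and then apply Theorem \ref{dthm} (at $t=0$) together with the Riesz--Markov and Hahn--Banach theorems to extract the measures $\mu_j$, following the scheme of the proof of Theorem \ref{trthm}. Note that the integration over $t \in [0,1]$ used in that proof is unavailable here: without the commutativity of $\Vt_n$, the intermediate tuple $\Xt_n(t)$ need not lie in $\Ccal_n$, so Theorem \ref{dthm} is only at our disposal at the endpoint.

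To prove the identity on a monomial $f(z_1,\ldots,z_n) = z_1^{k_1} \cdots z_n^{k_n}$, I substitute $B_j = A_j + V_j$ and fully expand $(A_1+V_1)^{k_1} \cdots (A_n+V_n)^{k_n}$. The purely $A$-term cancels against $f(\At_n)$, while the terms linear in a single $V_j$ assemble into
\[\sum_{j=1}^n \sum_{p_0+p_1 = k_j-1} A_1^{k_1} \cdots A_{j-1}^{k_{j-1}} A_j^{p_0} V_j A_j^{p_1} A_{j+1}^{k_{j+1}} \cdots A_n^{k_n};\]
every remaining term contains at least two factors drawn from $\{V_1,\ldots,V_n\}$ and thus lies in $\Ical^2$. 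Applying $\tau_\Ical$ annihilates these higher-order contributions by hypothesis, and cyclicity of the trace combined with the pairwise commutativity of the entries of $\At_n \in \Ccal_n$ collapses each single-$V_j$ sum to $k_j \tau_\Ical(V_j A_1^{k_1} \cdots A_j^{k_j-1} \cdots A_n^{k_n}) = \tau_\Ical(V_j \partial_j f(\At_n))$.

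To pass from monomials to a general $f \in \Acal(\D_{1+\eps}^n)$, I use the geometric decay of the Taylor coefficients $c_{k_1,\ldots,k_n}$ on $\D_{1+\eps}^n$. The monomial estimate $\|T_{k_1,\ldots,k_n}(\Bt_n) - T_{k_1,\ldots,k_n}(\At_n)\|_\Ical \leq (k_1+\cdots+k_n) \max_j \|V_j\|_\Ical$, obtained by iterating \eqref{mondif}, delivers absolute convergence in $\|\cdot\|_\Ical$ of both the series for $f(\Bt_n) - f(\At_n)$ and for $V_j \partial_j f(\At_n)$, so continuity of $\tau_\Ical$ on $\Ical$ promotes the monomial identity to the full one.

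With the identity in hand, the remainder mirrors the proof of Theorem \ref{trthm}. Theorem \ref{dthm} applied at $t=0$, valid because $\At_n \in \Ccal_n$ satisfies \eqref{dilfla} by hypothesis, yields
\[\left|\tau_\Ical\left(V_j \frac{\partial f}{\partial z_j}(\At_n)\right)\right| \leq \min\big\{\|\tau_\Ical\|_{\Ical^*} \|V_j\|_\Ical,\, \tau_\Ical(|\re(V_j)| + |\im(V_j)|)\big\} \cdot \left\|\frac{\partial f}{\partial z_j}\right\|_{L^\infty(\T^n)},\]
so $g \mapsto \tau_\Ical(V_j\, g(\At_n))$ extends by Hahn--Banach to a bounded linear functional on $C(\T^n)$, and Riesz--Markov produces measures $\mu_j$ on $\T^n$ satisfying \eqref{min}; substituting into the key identity gives \eqref{tf}, specializing to $f = z_j$ gives \eqref{mutau}, and specializing to $f(z_1,\ldots,z_n) = F(z_j)$ combined with Theorem \ref{dsmt1} applied to $(A_j,B_j)$ gives \eqref{connection1}. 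The main obstacle is the combinatorial bookkeeping: one must verify that the full remainder (all terms in the monomial expansion involving two or more factors of $V_i$, aggregated over the Taylor series of $f$) converges in $\|\cdot\|_\Ical$, so that the termwise vanishing of $\tau_\Ical$ on $\Ical^2$ propagates to the limit.
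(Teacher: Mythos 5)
Your proposal is correct and follows essentially the same route as the paper: the paper likewise shows that $f(\Bt_n)-f(\At_n)-\frac{d}{ds}\big|_{s=0}f(\Xt_n(s))$ lies in $\Ical^2$ (via \eqref{multest} and a further application of \eqref{mondif} as in \eqref{star}, rather than your direct monomial expansion), concludes $\tau_\Ical\big(f(\Bt_n)-f(\At_n)\big)=\sum_j\tau_\Ical\big(V_j\,\partial f/\partial z_j(\At_n)\big)$ from Theorem \ref{dthm} at $t=0$, and finishes with the Riesz--Markov/Hahn--Banach argument of Theorem \ref{trthm}. The convergence issue you flag at the end is real but handled exactly as you suggest: the partial sums lie in $\Ical^2$ and converge in $\|\cdot\|_\Ical$, so $\|\cdot\|_\Ical$-continuity of $\tau_\Ical$ propagates the vanishing to the limit.
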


\begin{proof}
By Lemma \ref{dlemma} and the representation \eqref{multest} with $\vareps=1$, we have
\begin{align}
\label{beforetr}
&f(\Bt_n)-f(\At_n)-\frac{d}{ds}\bigg|_{s=0}f(\Xt_n(s))\\
\nonumber
&=\sum_{j=1}^n\sum_{k_1,\ldots,k_n\geq 0}c_{k_1,\ldots,k_n}\!\!
\sum_{\substack{p_{0}^j,\,p_{1}^j\geq 0\\p_{0}^j+p_{1}^j=k_j-1}}\!\!
\bigg(T_{k_1,\ldots,k_{j-1}}(\At_{j-1})\,B_j^{p_{0}^j}V_j
A_j^{p_{1}^j}\,T_{k_{j+1},\ldots,k_n}(_{j+1}\Bt_n)\\
\nonumber
&\quad\quad\quad
-T_{k_1,\ldots,k_{j-1},p_0^j}(\At_j)\,V_j\,T_{p_1^j,k_{j+1},\ldots,k_n}(_{j}\At_n)\bigg).
\end{align}
Further application of \eqref{mondif} as in \eqref{star} shows that the expression in \eqref{beforetr} is an element of $\Ical^2$. Therefore,
\begin{align*}
\tau_\Ical\big(f(\Bt_n)-f(\At_n)\big)
=\tau_\Ical\left(\frac{d}{ds}\bigg|_{s=0}f(\Xt_n(s))\right).
\end{align*}
By Theorem \ref{dthm}, the latter equals
\[\tau_\Ical\big(f(\Bt_n)-f(\At_n)\big)=
\sum_{j=1}^n\tau_\Ical\left(V_j\,\frac{\partial{f}}{\partial{z_j}}(\At_n)\right).\]
Repeating the reasoning in the proof of Theorem \ref{trthm} completes the proof of existence of measures satisfying \eqref{min}--\eqref{tf}.
\end{proof}



\section{Second order derivatives}

A natural object appearing in evaluation of directional derivatives of single variable operator functions is a divided difference. In representations of derivatives of multivariate operator functions, we will additionally need a certain modification of a multivariate difference operator.

Let $g$ be a function on $\C$. The $0$-th order divided difference of $g$ is defined by $g[\la_0]=g(\la_0)$; the divided difference of $k$-th order is defined recursively by
\[g[\la_0,\ldots,\la_r]=\begin{cases}\frac{g[\la_0,\ldots,\la_r]-g[\la_0,\ldots,\la_{r-1}]}
{\la_r-\la_{r-1}} &\text{ if } \la_{r-1}\neq\la_r\\
\frac{d}{dt}\big|_{t=\la_r}g[\la_0\ldots,\la_{r-2},t] &\text{ if }  \la_{r-1}=\la_r,\end{cases}\]
where $\la_0,\ldots,\la_r\in\C$. Note that $g[\la_0,\ldots,\la_r]$ is a symmetric function of the sequence $\{\la_0,\ldots,\la_r\}$. Let $\phi$ be a function on $\C^n$.
We define the $r$-th order divided difference of $\phi$ in the $j$-th coordinate by
\[\phi(z_1,\ldots,z_{j-1},[\la_0,\ldots,\la_r],z_{j+1},\ldots,z_n)=g_j[\la_0,\ldots,\la_r],\]
where
\begin{align}
\label{display}
g_j(\la)=\phi(z_1,\ldots,z_{j-1},\la,z_{j+1},\ldots,z_n)
\end{align}
and $z_1,\ldots,z_{j-1},z_{j+1},\ldots,z_n$ are fixed points in $\C$.

The following well known property of the divided difference can be established by induction on the order $r$.

\begin{lemma}
\label{ddder}
If $\phi\in\Acal(\C^n)$, $z_1,\ldots,z_{j-1},z_{j+1},\ldots,z_n\in\C$, and $g_j$ is given by \eqref{display}, then
\begin{align*}
&\phi(z_1,\ldots,z_{j-1},[\la_0,\ldots,\la_r],z_{j+1},\ldots,z_n)\\
&=\int_0^1 dt_1\int_0^{t_1}dt_2\ldots\int_0^{t_{r-1}}g_j^{(r)}
(\la_r+(\la_{r-1}-\la_r)t_1+\ldots+(\la_0-\la_1)t_r)\,dt_r,
\end{align*}
and
\[\sup_{z_1,\ldots,z_n,\la_0,\ldots,\la_r\in\C}\big|
\phi(z_1,\ldots,z_{j-1},[\la_0,\ldots,\la_r],z_{j+1},\ldots,z_n)\big|
\leq\frac{1}{r!}\left\|\frac{\partial^r \phi}{\partial z_j^r}\right\|_\infty.\]
\end{lemma}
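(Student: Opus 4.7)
The plan is to reduce the claim to a single-variable statement and then prove the Hermite--Genocchi integral representation for the divided difference of $g_j$ by induction on $r$. Since $z_1,\ldots,z_{j-1},z_{j+1},\ldots,z_n$ are held fixed and the divided difference in the $j$-th coordinate is defined through $g_j$, the identity to be shown is
\[
g_j[\la_0,\ldots,\la_r]=\int_0^1 dt_1\int_0^{t_1}dt_2\cdots\int_0^{t_{r-1}}g_j^{(r)}\bigl(\la_r+(\la_{r-1}-\la_r)t_1+\cdots+(\la_0-\la_1)t_r\bigr)\,dt_r.
\]

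For the base case $r=1$, the fundamental theorem of calculus gives $\int_0^1 g_j'(\la_1+(\la_0-\la_1)t_1)\,dt_1=(g_j(\la_0)-g_j(\la_1))/(\la_0-\la_1)=g_j[\la_0,\la_1]$ whenever $\la_0\neq\la_1$, while both sides reduce to $g_j'(\la_1)$ in the coincident case. For the inductive step I would first assume the $\la$'s pairwise distinct and carry out the innermost $t_r$-integration by recognizing the integrand as $(\la_0-\la_1)^{-1}\frac{d}{dt_r}g_j^{(r-1)}(\cdots)$. This produces
\[
\frac{1}{\la_0-\la_1}\Bigl[g_j^{(r-1)}\bigl(\cdot\bigr)\Big|_{t_r=t_{r-1}}-g_j^{(r-1)}\bigl(\cdot\bigr)\Big|_{t_r=0}\Bigr],
\]
and the inductive hypothesis applied to each of the two resulting $(r-1)$-fold integrals identifies them as $g_j[\la_0,\la_2,\ldots,\la_r]$ and $g_j[\la_1,\la_2,\ldots,\la_r]$ respectively; their difference divided by $\la_0-\la_1$ then equals $g_j[\la_0,\la_1,\ldots,\la_r]$ by the symmetry of the divided difference and its defining recursion.

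To cover the case in which some of the $\la$'s coincide, I would invoke a continuity argument: both sides are jointly analytic in $(\la_0,\ldots,\la_r)$ for entire $\phi$, so equality on the dense open set where the coordinates are pairwise distinct extends by continuity everywhere. The supremum bound then follows immediately: the simplex $\{0\leq t_r\leq t_{r-1}\leq\cdots\leq t_1\leq 1\}$ has volume $1/r!$, the integrand is pointwise bounded by $\|g_j^{(r)}\|_\infty$, and $g_j^{(r)}(\la)=(\partial^r\phi/\partial z_j^r)(z_1,\ldots,\la,\ldots,z_n)$ is dominated by $\|\partial^r\phi/\partial z_j^r\|_\infty$.

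The only mildly delicate point I anticipate is the bookkeeping in the inductive step, i.e., confirming that after evaluating at the endpoints $t_r=0$ and $t_r=t_{r-1}$ and performing a suitable reindexing of the remaining integration variables, the two boundary $(r-1)$-fold integrals are indeed the Hermite--Genocchi representations of $g_j[\la_1,\la_2,\ldots,\la_r]$ and $g_j[\la_0,\la_2,\ldots,\la_r]$; once this identification is verified, the symmetry and recursion of the divided difference close the argument.
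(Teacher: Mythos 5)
Your proof is correct and follows exactly the route the paper indicates: the paper gives no written argument for this lemma, remarking only that the Hermite--Genocchi representation ``can be established by induction on the order $r$,'' which is precisely the induction you carry out. The bookkeeping in your inductive step (the two boundary integrals being the $(r-1)$-fold representations of $g_j[\la_0,\la_2,\ldots,\la_r]$ and $g_j[\la_1,\la_2,\ldots,\la_r]$, combined via symmetry and the recursion) checks out, as does the extension to coincident nodes by analyticity and the $1/r!$ simplex-volume bound.
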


Let $\{\vec e_1,\ldots,\vec e_n\}$ denote the canonical basis in $\R^n$. Let $0\neq h_i\in\C$, $1\leq i\leq n$, and $\vec z=(z_1,\ldots,z_n)$. Define
\begin{align}
\label{difference}
\nonumber
\Delta_{h_i\vec e_i}\,\phi(\vec z)=\frac{\phi(\vec z+h_i\vec e_i)-\phi(\vec z)}{h_i}
&=\phi(z_1,\ldots,z_{i-1},[z_i+h_i,z_i],z_{i+1},\ldots,z_n),\\
\Delta_{h_1\vec e_1,\ldots,h_r\vec e_r}\,\phi(\vec z)&=\Delta_{h_1\vec e_1}\ldots\Delta_{h_r\vec e_r}\,\phi(\vec z).
\end{align}
The latter equals $\Delta_{h_{\pi(1)}\vec e_{\pi(1)}}\ldots\Delta_{h_{\pi(r)}\vec e_{\pi(r)}}\,\phi(\vec z)$, for any permutation $\pi$ of $\{1,\ldots,r\}$.

\begin{lemma}
\label{boxs}
Let $1\leq r\leq n$ and $\vec\xi_i=h_i\vec e_{\pi(i)}$, $h_i\neq 0$, $1\leq i\leq r$, where $\pi$ is a function mapping $\{1,\ldots,r\}$ to $\{1,\ldots,n\}$. Then, for $\vec z\in\C^n$, $\phi\in \Acal(\C^n)$,
\begin{align}
\label{boxs1}
\Delta_{h_1\vec e_{\pi(1)},\ldots,h_r\vec e_{\pi(r)}}\phi(\vec z)=\int_{[0,1]^r}\frac{\partial^r \phi}{\partial z_{\pi(1)}\ldots\partial z_{\pi(r)}}\left(\vec z+\sum_{i=1}^r\vec\la(i)h_i\vec e_{\pi(i)}\right)d\vec\la,
\end{align}
where $\vec\la(i)$ denotes the $i$-th component of the vector $\vec\la$, and, hence,
\begin{align}
\label{boxs2}
\sup_{h_1,\ldots,h_r\in\C\setminus\{0\},\,\vec z\in\C^n}\big|\Delta_{h_1\vec e_{\pi(1)},\ldots,h_r\vec e_{\pi(r)}}\phi(\vec z)\big|
\leq\left\|\frac{\partial^r \phi}{\partial z_{\pi(1)}\ldots\partial z_{\pi(r)}}\right\|_\infty.
\end{align}
\end{lemma}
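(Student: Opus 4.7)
The plan is to prove \eqref{boxs1} by induction on $r$, from which \eqref{boxs2} follows immediately: the integrand in \eqref{boxs1} is pointwise bounded by $\left\|\frac{\partial^r \phi}{\partial z_{\pi(1)}\cdots\partial z_{\pi(r)}}\right\|_\infty$ and the cube $[0,1]^r$ has unit Lebesgue measure.

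For the base case $r=1$, I would unpack the definition \eqref{difference}: $\Delta_{h_1\vec e_{\pi(1)}}\phi(\vec z)$ is the first-order divided difference of $\phi$ in its $\pi(1)$-th coordinate at the nodes $z_{\pi(1)}+h_1$ and $z_{\pi(1)}$. Setting $g_{\pi(1)}(\la)=\phi(z_1,\ldots,z_{\pi(1)-1},\la,z_{\pi(1)+1},\ldots,z_n)$ and invoking Lemma \ref{ddder} with $r=1$, this equals
\[\int_0^1 g_{\pi(1)}'(z_{\pi(1)}+\la h_1)\,d\la=\int_0^1 \frac{\partial \phi}{\partial z_{\pi(1)}}(\vec z+\la h_1\vec e_{\pi(1)})\,d\la,\]
which is exactly \eqref{boxs1} in the case $r=1$.

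For the inductive step, I would decompose
\[\Delta_{h_1\vec e_{\pi(1)},\ldots,h_r\vec e_{\pi(r)}}\phi(\vec z)=\Delta_{h_1\vec e_{\pi(1)}}\bigl(\Delta_{h_2\vec e_{\pi(2)},\ldots,h_r\vec e_{\pi(r)}}\phi\bigr)(\vec z)\]
and apply the inductive hypothesis to the inner operator, obtaining an $(r-1)$-fold integral over $[0,1]^{r-1}$ of $\frac{\partial^{r-1}\phi}{\partial z_{\pi(2)}\cdots\partial z_{\pi(r)}}$ evaluated at $\vec z+\sum_{i=2}^r\vec\mu(i)h_i\vec e_{\pi(i)}$. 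That partial derivative lies again in $\Acal(\C^n)$, so the base case applies to it. Interchanging $\Delta_{h_1\vec e_{\pi(1)}}$ with the $(r-1)$-fold integral is justified by Fubini since the integrand is continuous and bounded on the compact cube, and combining the inner integral with the remaining one yields the single integral over $[0,1]^r$ in \eqref{boxs1}.

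The one point that requires a moment of attention is that $\pi$ is merely a function from $\{1,\ldots,r\}$ to $\{1,\ldots,n\}$, not a permutation, so the coordinates hit by the successive $\Delta$-operators may repeat. This causes no difficulty: each $\Delta_{h_i\vec e_{\pi(i)}}$ still acts on a single fixed coordinate, iterated application remains within $\Acal(\C^n)$, and the commutativity of the $\Delta$-operators noted after \eqref{difference}, together with Fubini, justifies all interchanges regardless of whether $\pi(i)=\pi(i')$ for some $i\neq i'$; hence the induction goes through unchanged.
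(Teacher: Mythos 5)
Your proposal is correct and follows essentially the same route as the paper: induction on $r$, with the base case given by the fundamental theorem of calculus (equivalently, Lemma \ref{ddder} with $r=1$) and the inductive step obtained by interchanging one remaining difference operator with the $(r-1)$-fold integral supplied by the inductive hypothesis; the only cosmetic difference is that you peel off the first operator $\Delta_{h_1\vec e_{\pi(1)}}$ while the paper peels off the last, which is immaterial by the commutativity of the $\Delta$-operators. The bound \eqref{boxs2} then follows exactly as you say.
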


\begin{proof}
Note that \eqref{boxs2} is an immediate consequence of the representation \eqref{boxs1}.
The proof of \eqref{boxs1} goes by induction on $r$.
If $r=1$, then
\begin{align}
\label{d1}\Delta_{h_r\vec e_{\pi(r)}}\phi(\vec z)=\frac{1}{h_r}\int_0^1 \left(\frac{d}{d\la}\phi(\vec z+\la\,h_r\vec e_{\pi(r)})\right)d\la=\int_0^1 \frac{\partial \phi}{\partial z_{\pi(r)}}(\vec z+\la\,h_r\vec e_{\pi(r)})\,d\la.
\end{align}
Assume that the formula holds for $r-1$, that is,
\[\Delta_{h_1\vec e_{\pi(1)},\ldots,h_{r-1}\vec e_{\pi(r-1)}}\phi(\vec z)=\int_{[0,1]^{r-1}}\frac{\partial^{r-1}\phi}{\partial z_{\pi(1)}\ldots\partial z_{\pi(r-1)}}\left(\vec z+\sum_{i=1}^{r-1}\vec\mu(i)h_i\vec e_{\pi(i)}\right)d\vec\mu.\]
Combining the latter with \eqref{d1} gives
\begin{align*}
&\Delta_{h_1\vec e_{\pi(1)},\ldots,h_r\vec e_{\pi(r)}}\phi(\vec z)=\int_{[0,1]^{r-1}}\Delta_{h_r\vec e_{\pi(r)}}\left(\frac{\partial^{r-1}\phi}{\partial z_{\pi(1)}\ldots\partial z_{\pi(r-1)}}\right)
\left(\vec z+\sum_{i=1}^{r-1}\vec\mu(i)h_i\vec e_{\pi(i)}\right)d\vec\mu\\
&\quad=\int_{[0,1]^{r-1}}\int_0^1\frac{\partial^r\phi}{\partial z_{\pi(1)}\ldots\partial z_{\pi(r)}}\left(\vec z+\sum_{i=1}^{r-1}\vec\mu(i)h_i\vec e_{\pi(i)}+\la\,h_r\vec e_{\pi(r)}\right)d\la\,d\vec\mu=\eqref{boxs1}.
\end{align*}
\end{proof}

In the evaluation of G\^{a}teaux derivatives of operator functions, we will need the following representations for the difference and divided difference operators.

\begin{lemma}\label{lemma4.1}
Let $N\in\N$ and
\[f_N(z_1,\ldots,z_n)=\!\!\sum_{0\leq k_1,\ldots,k_n\leq N}\!\! c_{k_1,\ldots,k_n} z_1^{k_1}\ldots z_n^{k_n}.\] The following assertions hold for $h_i\neq 0, h_j\neq 0$.
\begin{enumerate}[(i)]
\item\label{4.1ii}
If $k_j\geq 2$, then
\begin{align*}
&f_N(z_1,\ldots,z_{j-1},[z_j+h_j,z_j+h_j,z_j],z_{j+1},\ldots,z_n)\\
&\quad\quad=\!\!\sum_{0\leq k_1,\ldots,k_n\leq N}\!\!c_{k_1,\ldots,k_n}\!\!\sum_{\substack{p_{0},\,p_{1},p_2\geq 0\\p_{0}+p_{1}+p_2=k_j-2}}\!\!
z_1^{k_1}\ldots z_{j-1}^{k_{j-1}}(z_j+h_j)^{p_0+p_2}z_j^{p_1}z_{j+1}^{k_{j+1}}\ldots z_n^{k_n}.
\end{align*}

\item\label{4.1i}
If $k_j\geq 1$, then
\begin{align*}
\Delta_{h_i\vec e_i,\,h_j\vec e_j}\,f_N(z_1,\ldots,z_n)
&=\!\!\sum_{0\leq k_1,\ldots,k_n\leq N}\!\!c_{k_1,\ldots,k_n}\!\!\sum_{\substack{q_{0},\,q_{1}\geq 0\\q_{0}+q_{1}=k_i-1}}\,
\sum_{\substack{p_{0},\,p_{1}\geq 0\\p_{0}+p_{1}=k_j-1}} \\
& z_1^{k_1}\ldots z_{i-1}^{k_{i-1}}(z_i+h_i)^{q_0}z_i^{q_1}z_{i+1}^{k_{i+1}}\ldots
z_{j-1}^{k_j-1}(z_j+h_j)^{p_0}z_j^{p_1}z_{j+1}^{k_{j+1}}\ldots z_n^{k_n}.
\end{align*}
\end{enumerate}
\end{lemma}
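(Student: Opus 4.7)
The plan is to reduce both assertions to a single-monomial calculation and to invoke the standard closed-form expansion of the divided difference of a power, namely
\[\la^k[\la_0,\ldots,\la_r]=\sum_{p_0+\cdots+p_r=k-r}\la_0^{p_0}\cdots\la_r^{p_r}\]
for $k\geq r$, with the empty-sum convention (value $0$) when $k<r$. This identity follows by a short induction on $r$ from the recursive definition of the divided difference whenever the arguments are pairwise distinct, and it extends to the confluent case through the continuity built into the derivative convention. I would state this as a brief auxiliary lemma.

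For part \eqref{4.1ii}, by linearity of both sides in $c_{k_1,\ldots,k_n}$ it suffices to verify the identity on a single monomial $z_1^{k_1}\cdots z_n^{k_n}$. Freezing every variable except $z_j$ and setting $g_j(\la)=\la^{k_j}$ as in \eqref{display}, the relevant quantity is $g_j[z_j+h_j,z_j+h_j,z_j]$. Using symmetry of the divided difference in its arguments I would rewrite this as $g_j[z_j+h_j,z_j,z_j+h_j]$, and then the power identity with $r=2$ gives
\[\sum_{p_0+p_1+p_2=k_j-2}(z_j+h_j)^{p_0}z_j^{p_1}(z_j+h_j)^{p_2}=\sum_{p_0+p_1+p_2=k_j-2}(z_j+h_j)^{p_0+p_2}z_j^{p_1}.\]
Restoring the frozen monomial factors in the other coordinates yields the stated identity.

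For part \eqref{4.1i}, again reduce to a monomial $z_1^{k_1}\cdots z_n^{k_n}$. By the definition \eqref{difference}, $\Delta_{h_i\vec e_i,h_j\vec e_j}=\Delta_{h_i\vec e_i}\Delta_{h_j\vec e_j}$, and each difference operator acts only in its own coordinate, leaving the remaining ones as multiplicative factors. Applying the $r=1$ case of the power identity in the $i$-th and then in the $j$-th coordinate yields $\sum_{q_0+q_1=k_i-1}(z_i+h_i)^{q_0}z_i^{q_1}$ and $\sum_{p_0+p_1=k_j-1}(z_j+h_j)^{p_0}z_j^{p_1}$ respectively, and the product of these two expansions multiplied by the unaltered $z_l^{k_l}$, $l\neq i,j$, is exactly the double sum in the statement. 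The only point worth flagging is the confluent pair $z_j+h_j,z_j+h_j$ appearing in part \eqref{4.1ii}; I prefer to dispose of it by the symmetry rearrangement above, which keeps the computation purely combinatorial and avoids invoking the derivative-based definition of divided differences at confluent arguments.
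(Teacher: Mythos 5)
Your proposal is correct and follows essentially the same route as the paper: reduce to a single monomial by linearity and then expand via the identity $\la^k[\la_0,\ldots,\la_r]=\sum_{p_0+\cdots+p_r=k-r}\la_0^{p_0}\cdots\la_r^{p_r}$ (the paper states the $r=1$ case explicitly and iterates it, which is the same computation you package as an auxiliary lemma). One small inaccuracy: rearranging to $[z_j+h_j,z_j,z_j+h_j]$ does not actually avoid confluent arguments, since the recursion still produces the repeated pair $[z_j+h_j,z_j+h_j]$ at an inner step; this is harmless because your auxiliary identity (and, for polynomials, the derivative convention $g[\la_0,\la_0]=g'(\la_0)$) covers that case anyway.
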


\begin{proof}
By linearity of the divided difference and operator defined in \eqref{difference}, it is enough to prove the lemma for
$p(z_1,\ldots,z_n)=z_1^{k_1}\ldots z_n^{k_n}$, where $k_1,\ldots,k_n\in\N\cup\{0\}$.
It is straightforward to see that
\[\Delta_{h_j\vec e_j}\,p(z_1,\ldots,z_n)=\sum_{\substack{p_{0},\,p_{1}\geq 0\\p_{0}+p_{1}=k_j-1}}\!\!
z_1^{k_1}\ldots z_{j-1}^{k_{j-1}}(z_j+h_j)^{p_0}z_j^{p_1}z_{j+1}^{k_{j+1}}\ldots z_n^{k_n}.\]
Subsequent evaluation of the divided difference gives \eqref{4.1ii} and of the operator \eqref{difference} gives \eqref{4.1i}.
\end{proof}

\begin{lemma}
\label{d2lemma}
Assume Notations \ref{not} and let $\Ical$ be a normed ideal. If 
$f\in \Acal(\D^n_{1+\eps})$, then $\ds t\mapsto\frac{d^2}{ds^2}\bigg|_{s=t}f(\Xt_n(s))$ exists in the operator norm. Moreover, for $f$ given by \eqref{sr},
\begin{align}
\label{d2fla}
\frac{d^2}{ds^2}\bigg|_{s=t}f(\Xt_n(s))=2\sum_{1\leq i< j\leq n}D^{i,j}_f(t)
+\sum_{1\leq j\leq n}D^{j,j}_f(t),
\end{align}
where
\begin{align*}
D^{i,j}_f(t)&=\!\!\sum_{k_1,\ldots,k_n\geq 0}c_{k_1,\ldots,k_n}
\bigg(T_{k_1,\ldots,k_{i-1}}(\Xt_{i-1}(t))\,\frac{d}{ds}\bigg|_{s=t}X_i(s)^{k_i}\,
T_{k_{i+1},\ldots,k_{j-1}}(_{i+1}\Xt_{j-1}(t))\\
&\quad\quad\frac{d}{ds}\bigg|_{s=t}X_j(s)^{k_j}\,
T_{k_{j+1},\ldots,k_n}(_{j+1}\Xt_n(t))\bigg),\quad\text{if } i<j,
\end{align*}
\begin{align*}
D^{j,j}_f(t)=\!\!\sum_{k_1,\ldots,k_n\geq 0}c_{k_1,\ldots,k_n}T_{k_1,\ldots,k_{j-1}}(\Xt_{j-1}(t))\,
\frac{d^2}{ds^2}\bigg|_{s=t}X_j(s)^{k_j}\,T_{k_{j+1},\ldots,k_n}(_{j+1}\Xt_n(t)).
\end{align*}
\begin{enumerate}[(i)]
\item
If $V_j\in\Ical$, then $\ds t\mapsto\frac{d^2}{ds^2}\bigg|_{s=t}f(\Xt_n(s))$ is continuous on $[0,1]$ in the norm $\|\cdot\|_\Ical$.
\item Assume that $\Ical^{1/2}$ is a normed ideal with ideal norm $\|\cdot\|_{\Ical^{1/2}}$ and that the inequality \eqref{normCS} holds. If $V_j\in\Ical^{1/2}$, $1\leq j\leq n$, then $\ds t\mapsto\frac{d^2}{ds^2}\bigg|_{s=t}f(\Xt_n(s))$ is continuous in $\|\cdot\|_\Ical$.
\end{enumerate}
\end{lemma}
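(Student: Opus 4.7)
The plan is to parallel the proof of Lemma \ref{dlemma} with one extra layer of telescoping: first verify \eqref{d2fla} termwise on polynomials, then pass to the analytic case by absolute convergence of the resulting series. Existence of the second derivative in the operator norm and the formula \eqref{d2fla} will be obtained simultaneously, after which the ideal-norm continuity statements (i) and (ii) reduce to estimates on the same series.

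For a polynomial $f_N(z_1,\ldots,z_n)=\sum_{k_1,\ldots,k_n\leq N}c_{k_1,\ldots,k_n}z_1^{k_1}\cdots z_n^{k_n}$, each monomial $X_1(s)^{k_1}\cdots X_n(s)^{k_n}$ is twice $s$-differentiable in operator norm by \eqref{monder}--\eqref{monder2}. The ordinary Leibniz rule applied to a product of $n$ factors produces exactly two kinds of contributions: diagonal terms, in which the full second $s$-derivative falls on a single factor $X_j(s)^{k_j}$, and cross terms, in which first $s$-derivatives are applied to two distinct factors $X_i(s)^{k_i}$ and $X_j(s)^{k_j}$ with $i<j$, appearing with combinatorial multiplicity $2$ from the unordered pair $\{i,j\}$. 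Substituting \eqref{monder} and \eqref{monder2} identifies these with $D^{j,j}_{f_N}(t)$ and $D^{i,j}_{f_N}(t)$, respectively; the factor $2$ inside $D^{j,j}_f$ is the $2$ appearing in \eqref{monder2}, while the factor $2$ in front of $\sum_{i<j}$ in \eqref{d2fla} is the Leibniz factor.

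To extend to $f\in\Acal(\D^n_{1+\eps})$, I would estimate the formal series \eqref{d2fla} directly. Since $\|X_i(t)\|\leq 1$ for every $t\in[0,1]$ and every $1\leq i\leq n$, every term of $D^{j,j}_f(t)$ is bounded in operator norm by a multiple of $|c_{k_1,\ldots,k_n}|\,\|V_j\|^2\,k_j(k_j-1)$, and every term of $D^{i,j}_f(t)$ by $|c_{k_1,\ldots,k_n}|\,\|V_i\|\|V_j\|\,k_ik_j$; both are dominated by a constant times $|c_{k_1,\ldots,k_n}|(k_1+\cdots+k_n)^2$. The series $\sum |c_{k_1,\ldots,k_n}|(k_1+\cdots+k_n)^2$ converges because the second partial derivatives of $f$ on $\D^n_{1+\eps}$ are given by termwise-differentiated absolutely convergent series. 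Writing $\tfrac{d}{ds}|_{s=t}f(\Xt_n(s))=\sum_j D^j_f(t)$ from Lemma \ref{dlemma} and applying \eqref{mondif} to each term of the difference quotient $\vareps^{-1}\bigl(D^j_f(t+\vareps)-D^j_f(t)\bigr)$ exactly as in the proof of Lemma \ref{dlemma} yields an $O(\vareps)$ error. Summation over $j$ then gives both \eqref{d2fla} and the existence of $\tfrac{d^2}{ds^2}|_{s=t}f(\Xt_n(s))$ in the operator norm.

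For the ideal-norm continuity, the same series bounds are reused with modifications depending on the hypothesis on $V_j$. Under (i), one of the two perturbation factors in each product can be absorbed via Definition \ref{ideal}(iii), giving an $\|\cdot\|_\Ical$-bound proportional to $\max_j\|V_j\|_\Ical\cdot\max_j\|V_j\|$ times the same convergent series. Under (ii), every product in $D^{i,j}_f(t)$ and $D^{j,j}_f(t)$ contains exactly two perturbation factors from $\Ical^{1/2}$; after absorbing the intermediate $X_l(t)^{k_l}$ factors via Definition \ref{ideal}(iii) for $\Ical^{1/2}$, the inequality \eqref{normCS} supplies the bound $\|V_iWV_j\|_\Ical\leq\|V_i\|_{\Ical^{1/2}}\|W\|\|V_j\|_{\Ical^{1/2}}$ for any $W\in\BH$. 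Continuity in $t$ in either ideal norm then follows by applying \eqref{mondif} to differences $D^{i,j}_f(t_1)-D^{i,j}_f(t_2)$ and $D^{j,j}_f(t_1)-D^{j,j}_f(t_2)$ to produce an $O(|t_1-t_2|)$ bound.

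The main obstacle is notational bookkeeping: tracking which products in $\tfrac{d}{du}D^j_f(u)$ arise from differentiating which factor (those from $X_i^{k_i}$ with $i\neq j$ become cross terms; that from $\tfrac{d}{ds}|_{s=u}X_j(s)^{k_j}$ becomes a diagonal term), so that the combinatorial factor $2$ in front of $\sum_{i<j}$ is correctly accounted for; and correctly matching the two perturbation factors in case (ii) so that \eqref{normCS} is applied to a genuine product $V_iWV_j$ rather than to $V_i$ alone. No substantially new analytic idea beyond that in Lemma \ref{dlemma} appears to be required.
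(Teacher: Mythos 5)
Your proposal is correct and follows essentially the same route as the paper, which itself only states that the result "can be established completely analogously to the results of Lemma \ref{dlemma}"; your telescoping via \eqref{mondif}--\eqref{monder2}, the Leibniz bookkeeping giving the factor $2$ on the off-diagonal terms, the domination by $\sum|c_{k_1,\ldots,k_n}|(k_1+\cdots+k_n)^2$, and the use of Definition \ref{ideal}(iii) together with \eqref{normCS} to place $V_iWV_j$ in $\Ical$ are exactly the intended details. In fact your write-up supplies more of the argument than the paper does, and the structure you describe is the one that reappears explicitly in the paper's proof of Lemma \ref{remr2} (the terms $S_{i<j}$, $S_{i>j}$, $S_j$ in \eqref{Psi}).
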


\begin{proof}
Existence and continuity of $\frac{d^2}{ds^2}\big|_{s=t}f(\Xt_n(s))$ as well as the formula \eqref{d2fla} can be established completely analogously to the results of Lemma \ref{dlemma}.
\end{proof}

\begin{hyp}
\label{hyp2}
Let $\Ical$ be a normed ideal endowed with a positive and $\|\cdot\|_\Ical$-bounded trace $\tau_\Ical$. Assume that $\Ical^{1/2}$ is a normed ideal with ideal norm $\|\cdot\|_{\Ical^{1/2}}$ and that the inequality \eqref{normCS} holds.
Assume Notations \ref{not} and that $\At_n, \Bt_n\in\Ccal_n$ satisfy $V_j\in\Ical^{1/2}$, $1\leq j\leq n$.
\end{hyp}

\begin{thm}
\label{d2thm}
Assume Notations \ref{not} and Hypotheses \ref{hyp2}.
Suppose that there exists $t\in [0,1]$ such that $\Xt_n(t)\in\Ccal_n$ satisfies \eqref{dilfla}.
Then, for every $f\in \Acal(\D_{1+\eps}^n)$, $\eps>0$,
\begin{align}
\label{pd2est}
\left|\tau_\Ical\left(D^{i,j}_f(t)\right)\right|
\leq \big(\tau_\Ical(|V_i|^2)\big)^{1/2}\big(\tau_\Ical(|V_j|^2)\big)^{1/2} \left\|\frac{\partial^2{f}}{\partial z_i\partial z_j}\right\|_{L^\infty(\Omega^n)},
\end{align}
with $1\leq i\leq j\leq n$, where $\Omega=\T$. If, in addition, $\Xt_n(t)$ is a tuple of self-adjoint contractions, then \eqref{pd2est} holds with $\Omega=[-1,1]$ for $f\in \Acal((-1-\eps,1+\eps)^n)$, $\eps>0$.
\end{thm}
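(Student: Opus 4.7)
The plan is to parallel the proof of Theorem \ref{dthm}, upgrading the first-order divided-difference argument to a second-order one and replacing Lemma \ref{m2v}\eqref{m2vi} by Lemma \ref{m2v}\eqref{m2vii}. First I reduce to the case where $\Xt_n(t)$ is itself a tuple of commuting normal contractions: by \eqref{dilfla} and Proposition \ref{dilation} one lifts everything to a commuting normal dilation $\Ut_n$ on a larger Hilbert space $\mathcal{K}$ and a dilated ideal $\tilde\Ical$ with trace $\tau_{\tilde\Ical}$, and the inequality to be proved on $\mathcal{H}$ is dominated by its counterpart for $\Ut_n$ on $\mathcal{K}$, exactly as in the last paragraph of the proof of Theorem \ref{dthm}. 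Once $\Xt_n(t)$ is normal, it carries a joint spectral measure $E_t$ supported in $\T^n$ (or $[-1,1]^n$ in the self-adjoint case), which is the main analytical tool.

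For the off-diagonal case $i<j$, I apply \eqref{monder} to each of the two inner derivative factors in the formula for $D^{i,j}_f(t)$ given by Lemma \ref{d2lemma}. Pairwise commutativity of $X_1(t),\ldots,X_n(t)$ together with cyclicity of $\tau_\Ical$ lets me rewrite every summand in the form $\tau_\Ical\bigl(V_i\,P\,V_j\,Q\bigr)$, where $P$ and $Q$ are polynomials in $\Xt_n(t)$. Writing $P=\int\phi(z)\,dE_t(z)$ and $Q=\int\psi(w)\,dE_t(w)$ and carrying out the summations in $k_1,\ldots,k_n$ and in the inner indices $p_0,q_0$, the integrand $\phi(z)\psi(w)$ collapses, via the identity $\sum_{q_0+q_1=k-1}z^{q_1}w^{q_0}=(w^k-z^k)/(w-z)$ applied in coordinates $i$ and $j$, into exactly the combinatorial expression of Lemma \ref{lemma4.1}\eqref{4.1i} with $h_i=w_i-z_i$ and $h_j=w_j-z_j$. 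That expression is the mixed first-order difference $\Delta_{h_i\vec e_i,h_j\vec e_j}f$, which by Lemma \ref{boxs} is pointwise bounded by $\bigl\|\partial^2 f/\partial z_i\partial z_j\bigr\|_{L^\infty(\Omega^n)}$.

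To convert this pointwise bound into a bound on the trace, I approximate the iterated spectral integral by Riemann sums over Borel partitions of $\Omega$ in the relevant coordinates. Each approximating sum has the form $\sum_{\alpha,\beta}F(z_\alpha,w_\beta)\,\tau_\Ical\bigl(V_i\,E_t(\delta_\alpha)\,V_j\,E_t(\tilde\delta_\beta)\bigr)$ with $|F|\le\bigl\|\partial^2 f/\partial z_i\partial z_j\bigr\|_{L^\infty(\Omega^n)}$. Using $E_t(\delta_\alpha)^2=E_t(\delta_\alpha)$ and cyclicity, each individual trace becomes $\tau_\Ical\bigl(E_t(\delta_\alpha)V_iE_t(\tilde\delta_\beta)V_jE_t(\delta_\alpha)\bigr)$, which is the exact pattern of Lemma \ref{m2v}\eqref{m2vii}; hence the sum of absolute values is at most $\tau_\Ical(|V_i|^2)^{1/2}\tau_\Ical(|V_j|^2)^{1/2}$, and passing to the limit yields \eqref{pd2est} for $i<j$.

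The diagonal case $i=j$ follows the same template but starts from \eqref{monder2}. The triple sum $\sum_{p_0+p_1+p_2=k_j-2}$ together with an analogous cyclic rearrangement produces, after summation of the series, the second-order divided difference of $f$ in the $j$-th coordinate evaluated at $(w_j,w_j,z_j)$, i.e.\ the combinatorial pattern of Lemma \ref{lemma4.1}\eqref{4.1ii}. By Lemma \ref{ddder} this divided difference is pointwise bounded by $\tfrac12\bigl\|\partial^2 f/\partial z_j^2\bigr\|_{L^\infty(\Omega^n)}$, and the factor $\tfrac12$ absorbs the explicit factor $2$ in \eqref{monder2}; Lemma \ref{m2v}\eqref{m2vii} applied with $V_1=V_2=V_j$ then produces the right-hand side of \eqref{pd2est}. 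I expect the main technical obstacle to be the careful bookkeeping in the cyclic rearrangement step: one must track precisely how the powers of each $X_\ell(t)$ distribute between the two sides of $V_i$ and $V_j$, because only the correct split makes the scalar integrand collapse, after summation of the series, into the Lemma \ref{lemma4.1} expression controllable by a single second-order partial derivative of $f$.
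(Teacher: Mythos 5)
Your plan follows the paper's own proof essentially step for step: reduction to the normal case via the dilation of Proposition \ref{dilation}, rewriting $\tau_\Ical(D^{i,j}_{f_N}(t))$ via \eqref{monder}, \eqref{monder2}, commutativity and cyclicity, spectral approximation collapsing the scalar part into the expressions of Lemma \ref{lemma4.1}, and the bounds from Lemmas \ref{ddder}, \ref{boxs} and \ref{m2v}\eqref{m2vii}. The only (immaterial) differences are the order of the dilation step and that the paper makes the truncation to $f_N$ explicit before interchanging sums and limits.
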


\begin{proof}
We provide the proof only in case $\Omega=\T$; the proof in case $\Omega=[-1,1]$ is a verbatim repetition.
Let $N\in\N$ and \[f_N(z_1,\ldots,z_n)=\!\!\sum_{0\leq k_1,\ldots,k_n\leq N}c_{k_1,\ldots,k_n}z_1^{k_1}\ldots z_n^{k_n}.\]
Since
\[\|D_f^{i,j}(t)\|_\Ical\leq \!\!\sum_{k_1,\ldots,k_n\geq 0}\!\! k_i\,k_j\,|c_{k_1,\ldots,k_n}| \,\|V_i\|_{\Ical^{1/2}}\|V_j\|_{\Ical^{1/2}},\] we have
\[\tau_\Ical\big(D^{i,j}_f(t)\big)=\lim_{N\rightarrow\infty}\tau_\Ical\big(D^{i,j}_{f_N}(t)\big),\]
so it is enough to prove \eqref{pd2est} for the function $f_N$. Applying \eqref{monder}, \eqref{monder2}, and pairwise commutativity of $X_1(t),\ldots,X_n(t)$ ensures
\begin{align*}
\tau_\Ical(D^{j,j}_{f_N}(t))
&=2\!\!\sum_{0\leq k_1,\ldots,k_n\leq N}\!\!\! c_{k_1,\ldots,k_n}\!\!\!\!\!\sum_{\substack{p_{0},\,p_{1},\,p_{2}\geq 0\\p_{0}+p_{1}+p_{2}=k_j-2}}\!\!\!
\tau_\Ical\big(T_{k_1,\ldots,k_{j-1},p_0+p_2,k_{j+1},\ldots,k_n}(\Xt_n(t))V_jX_j(t)^{p_1}V_j\big)
\end{align*}
and, if $i<j$,
\begin{align*}
\tau_\Ical\big(D^{i,j}_{f_N}(t)\big)
&=\!\!\!\sum_{0\leq k_1,\ldots,k_n\leq N}\!\! c_{k_1,\ldots,k_n}\!\!\!\sum_{\substack{q_{0},\,q_{1}\geq 0\\q_{0}+q_{1}=k_i-1}}\,
\sum_{\substack{p_{0},\,p_{1}\geq 0\\p_{0}+p_{1}=k_j-1}}\!\!\\
&\quad
\tau_\Ical\big(T_{k_1,\ldots,k_{i-1},q_0}(\Xt_i(t))
T_{p_1,k_{j+1},\ldots,k_n}(_{j}\Xt_n(t))V_i\,T_{q_1,k_{i+1},\ldots,k_{j-1},p_0}(_{i}\Xt_j(t))\,V_j\big).
\end{align*}

{\it Case 1:} Assume, in addition, that the tuple $\Xt_n(t)$ consists of normal contractions.

Let $E_{t,j}$ be the spectral measure of $X_j(t)$ and $E_t$ the joint spectral measure of the tuple $(X_1(t),\ldots,X_n(t))$. By the spectral theorem, we have
\begin{align*}
X_j(t)^{p_1}&=\int_\C z^{p_1}\,dE_{t,j}(z),\\
T_{k_1,\ldots,k_{j-1},p_0+p_2,k_{j+1},\ldots,k_n}(\Xt_n(t))
&=\int_{\C^n}z_1^{k_1}\ldots z_{j-1}^{k_{j-1}}z_j^{p_0+p_2}z_{j+1}^{k_{j+1}}\ldots z_n^{k_n}\,
dE_t(z_1,\ldots,z_n).
\end{align*}
Hence, for every $1\leq l\leq n$, there is a sequence of Borel partitions $(\delta_{m,l,\beta_l})_{1\leq \beta_l\leq m}$ of $\C$, a sequence of tuples of complex numbers $(z_{m,l,\beta_l})_{1\leq \beta_l\leq m}$, and also sequence of partitions $(\tilde\delta_{m,\beta})_{1\leq \beta\leq m}$ of $\C$ and a sequence of tuples of complex numbers $(\tilde z_{m,\beta})_{1\leq \beta\leq m}$ such that
\begin{align}
\label{tmonj}
\nonumber
&\tau_\Ical\big(T_{k_1,\ldots,k_{j-1},p_0+p_2,k_{j+1},\ldots,k_n}(\Xt_n(t))\,V_j X_j(t)^{p_1}V_j\big)\\
\nonumber
&=\lim_{m\rightarrow\infty}\!\!\sum_{1\leq \beta_1,\ldots,\beta_n,\beta\leq m}\!\! \bigg(z_{m,1,\beta_1}^{k_1}\ldots z_{m,j-1,\beta_{j-1}}^{k_{j-1}}z_{m,j,\beta_j}^{p_0+p_2}
z_{m,j+1,\beta_{j+1}}^{k_{j+1}}\ldots z_{m,n,\beta_n}^{k_n}\tilde z_{m,\beta}^{p_1}\\
&\quad\quad \tau_\Ical\big(E_t\big(\delta_{m,1,\beta_1}\times\ldots\times\delta_{m,n,{\beta_n}}\big)V_j E_{t,j}\big(\tilde\delta_{m,\beta}\big)V_j\big)\bigg).
\end{align}
From \eqref{tmonj} and Lemma \ref{ddder}, we derive
\begin{align}
\label{djj}
&\tau_\Ical(D^{j,j}_{f_N}(t))=2\lim_{m\rightarrow\infty}\sum_{1\leq \beta_1,\ldots,\beta_n,\beta\leq m}\\
\nonumber
&\bigg(f_N\big(z_{m,1,\beta_1},\ldots,z_{m,j-1,\beta_{j-1}},[z_{m,j,\beta_j},z_{m,j,\beta_j},\tilde z_{m,\beta}],z_{m,j+1,\beta_{j+1}},\ldots,z_{m,n,\beta_n}\big)\\
\nonumber
&\quad\quad\tau_\Ical\big(E_t\big(\delta_{m,1,\beta_1}\times\ldots\times\delta_{m,n,{\beta_n}}\big)
V_j E_{t,j}\big(\tilde\delta_{m,\beta}\big)V_j\big)\bigg).
\end{align}
Applying Lemmas \ref{ddder} and \ref{m2v}\eqref{m2vii} to \eqref{djj} provides the estimate \eqref{pd2est}.

Now we suppose that $i<j$ and denote by $E_{t,i,\ldots,j}$ the spectral measure defined by
$E_{t,i,\ldots,j}(S_i,\ldots,S_j)=E_t(\C\times\ldots\times\C\times S_i\times\ldots\times S_j\times\C\times\ldots\times\C)$. We have
\begin{align*}
T_{q_1,k_{i+1},\ldots,k_{j-1},p_0}(_{i}\Xt_j(t))=\int_{\C^{j-i+1}}z_i^{q_1}z_{i+1}^{k_{i+1}}\ldots z_{j-1}^{k_{j-1}}z_j^{p_0}\,dE_{t,i,\ldots,j}(z_i,\ldots,z_j).
\end{align*}
Hence, for every integer $i\leq \gamma\leq j$, there is a sequence of Borel partitions $(\tilde\delta_{m,\gamma,\alpha_\gamma})_{1\leq \alpha_\gamma\leq m}$ of $\C$ and a sequence of complex numbers $(\tilde z_{m,\gamma,\alpha_\gamma})_{1\leq \alpha_\gamma\leq m}$ such that
\begin{align}
\label{tmon}
\nonumber
&\tau_\Ical\big(T_{k_1,\ldots,k_{i-1},q_0}(\Xt_j(t))T_{p_1,k_{j+1},\ldots,k_n}(_{j}\Xt_n(t))\,
V_i\,T_{q_1,k_{i+1},\ldots,k_{j-1},p_0}(_{i}\Xt_j(t))\,V_j\big)\\
\nonumber
&=\lim_{m\rightarrow\infty}\sum_{1\leq \beta_1,\ldots,\beta_n,\alpha_i,\ldots,\alpha_j\leq m}\!\! \bigg(z_{m,1,\beta_1}^{k_1}\ldots z_{m,i-1,\beta_{i-1}}^{k_{i-1}}
z_{m,i,\beta_i}^{q_0}z_{m,j,\beta_j}^{p_1}
z_{m,j+1,\beta_{j+1}}^{k_{j+1}}\ldots z_{m,n,\beta_n}^{k_n}\\
\nonumber
&\quad\quad\quad
\tilde z_{m,i,\alpha_i}^{q_1}\tilde z_{m,i+1,\alpha_{i+1}}^{k_{i+1}}\ldots \tilde z_{m,j-1,\alpha_{j-1}}^{k_{j-1}}\tilde z_{m,j,\alpha_j}^{p_0}\\
&\quad\quad\quad
\tau_\Ical\big(E_t\big(\delta_{m,1,\beta_1}\times\ldots\times\delta_{m,n,{\beta_n}}\big)V_i E_{t,i,\ldots,j}\big(\tilde\delta_{m,i,\alpha_i}\times\ldots\times\tilde\delta_{m,j,\alpha_j}\big)
V_j\big)\bigg).
\end{align}
From \eqref{tmon} and Lemma \ref{lemma4.1}\eqref{4.1i}, we derive
\begin{align*}
&\tau_\Ical(D^{i,j}_{f_N}(t))=\lim_{m\rightarrow\infty}\sum_{1\leq \beta_1,\ldots,\beta_n,\alpha_i,\ldots,\alpha_j\leq m}\bigg(\Delta_{(z_{m,i,\beta_i}-\tilde z_{m,i,\alpha_i})\vec e_i,\,(z_{m,j,\beta_j}-\tilde z_{m,j,\alpha_j})\vec e_j}\\
&\quad f_N\big(z_{m,1,\beta_1},\ldots,z_{m,i-1,\beta_{i-1}},\tilde z_{m,i,\alpha_i}\ldots,\tilde z_{m,j,\alpha_j},z_{m,j+1,\beta_{j+1}},
\ldots,z_{m,n,\beta_n}\big)\\
&\quad\tau_\Ical\big(E_t\big(\delta_{m,1,\beta_1}\times\ldots\times\delta_{m,n,{\beta_n}}\big)V_i E_{t,i,\ldots,j}\big(\tilde\delta_{m,i,\alpha_i}\times\ldots\times\tilde\delta_{m,j,\alpha_j}\big)V_j\big)
\bigg).
\end{align*}
Application of Lemmas \ref{boxs} and \ref{m2v}\eqref{m2vii} completes the proof in case $\Xt_n(t)$ is a tuple of normal contractions.

{\it Case 2:} $\Xt_n(t)$ is a tuple of contractions.

Since there exists a tuple of normal contractions $\Ut_n$ on a Hilbert space $\mathcal{K}\supset\mathcal{H}$ such that \eqref{dilfla} holds, we have
\begin{align*}
&\tau_\Ical\big(T_{k_1,\ldots,k_{i-1},q_0}(\Xt_i(t))
T_{p_1,k_{j+1},\ldots,k_n}(_{j}\Xt_n(t))V_i\,T_{q_1,k_{i+1},\ldots,k_{j-1},p_0}(_{i}\Xt_j(t))\,V_j\big)\\
&=\tau_{\tilde\Ical}\big(P_\mathcal{H}T_{k_1,\ldots,k_{i-1},q_0}(\Ut_i)
T_{p_1,k_{j+1},\ldots,k_n}(_{j}\Ut_n)V_iP_\mathcal{H}\,T_{q_1,k_{i+1},\ldots,k_{j-1},p_0}(_{i}\Ut_j)\,
V_jP_\mathcal{H}\big),
\end{align*}
for $\tau_{\tilde\Ical}$ given by Proposition \ref{dilation}. Hence, by Case 1,
\begin{align*}
\left|\tau_\Ical\left(D^{i,j}_f(t)\right)\right|
&\leq \big(\tau_{\tilde\Ical}(|V_iP_\mathcal{H}|^2)\big)^{1/2}
\big(\tau_{\tilde\Ical}(|V_jP_\mathcal{H}|^2)\big)^{1/2} \left\|\frac{\partial^2{f}}{\partial z_i\partial z_j}\right\|_{L^\infty(\T^n)}\\
&\leq\big(\tau_{\Ical}(|V_i|^2)\big)^{1/2}\big(\tau_{\Ical}(|V_j|^2)\big)^{1/2} \left\|\frac{\partial^2{f}}{\partial z_i\partial z_j}\right\|_{L^\infty(\T^n)}.
\end{align*}
Similarly, we derive the estimate for $\left|\tau_\Ical\left(D^{j,j}_f(t)\right)\right|$.
\end{proof}

Adjusting \cite[Lemma 5.6]{dsD} gives the following integral representation for the trace of the Taylor remainder.

\begin{lemma}
\label{remr2}
Assume Notations \ref{not} and Hypotheses \ref{hyp2}.
Then, for every $f\in \Acal(\D_{1+\eps}^n)$, $\eps>0$,
\begin{align*}
\tau_\Ical\left(f(\Bt_n)-f(\At_n)-\frac{d}{ds}\bigg|_{s=0}f(\Xt_n(s))\right)
=\int_0^1(1-t)\,\tau_\Ical\left(\frac{d^2}{ds^2}\bigg|_{s=t}f(\Xt_n(s))\right)dt.
\end{align*}
\end{lemma}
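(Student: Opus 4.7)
The plan is to apply the classical second-order Taylor formula with integral remainder to the operator-valued map $F:[0,1]\to\BH$ defined by $F(s):=f(\Xt_n(s))$, and then push the resulting identity through the trace $\tau_\Ical$. By Lemma \ref{dlemma}(i), $F$ is $C^1$ on $[0,1]$ in the operator norm, and by Lemma \ref{d2lemma}(ii) (which applies since Hypotheses \ref{hyp2} give $V_j\in\Ical^{1/2}$ for each $j$), $F''(t)$ exists, lies in $\Ical$ for every $t\in[0,1]$, and is continuous on $[0,1]$ in the ideal norm $\|\cdot\|_\Ical$; since $\|\cdot\|\leq K\|\cdot\|_\Ical$ by Definition \ref{ideal}(ii), $F''$ is in particular continuous in the operator norm, so $F$ is of class $C^2$ in operator norm.

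Because the second-order Taylor formula with integral remainder is valid for $C^2$ maps into any Banach space, I would first record the identity
\[
F(1)-F(0)-F'(0)=\int_0^1(1-t)F''(t)\,dt,
\]
initially viewed as a Bochner integral in the operator norm. The $\|\cdot\|_\Ical$-continuity of $F''$ then ensures that the same integral also exists as a Bochner integral in $(\Ical,\|\cdot\|_\Ical)$, and the two interpretations coincide via the continuous inclusion $\Ical\hookrightarrow\BH$. Consequently the right-hand side lies in $\Ical$, and therefore so does $F(1)-F(0)-F'(0)$; this is the essential observation, because it legitimizes the trace on the left-hand side of the statement even though the individual operators $F(1)$, $F(0)$, $F'(0)$ need not belong to $\Ical$ separately.

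Finally, I would apply $\tau_\Ical$ to both sides and commute it with the Bochner integral to obtain
\[
\tau_\Ical\!\left(F(1)-F(0)-F'(0)\right)=\int_0^1(1-t)\,\tau_\Ical\!\left(F''(t)\right)dt,
\]
which, after substituting $F(1)=f(\Bt_n)$ and $F(0)=f(\At_n)$, is exactly the claim. The only step requiring real justification is this interchange of $\tau_\Ical$ with the integral, and that reduces to the standard fact that a $\|\cdot\|_\Ical$-bounded linear functional commutes with $\Ical$-valued continuous Bochner integrals. No serious obstacle is expected, the whole argument being a direct higher-order adaptation of \cite[Lemma 5.6]{dsD}; the only subtlety worth noting is that, because $V_j$ is assumed only in $\Ical^{1/2}$, one must track the combined expression $F(1)-F(0)-F'(0)$ rather than its pieces when verifying membership in $\Ical$.
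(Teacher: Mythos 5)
Your proof is correct, but it packages the argument differently from the paper. You invoke the Banach-space second-order Taylor formula with integral remainder for the $C^2$ map $F(s)=f(\Xt_n(s))$ in one stroke, taking Lemma \ref{d2lemma}(ii) as the source of both the membership $F''(t)\in\Ical$ and the $\|\cdot\|_\Ical$-continuity needed to upgrade the remainder integral to a Bochner integral in $(\Ical,\|\cdot\|_\Ical)$ and to pull $\tau_\Ical$ inside. The paper instead works with $\Psi(t)=F'(t)-F'(0)$: it first proves the first-order identity $F(1)-F(0)-F'(0)=\int_0^1\Psi(t)\,dt$ by testing against continuous linear functionals, then verifies by an explicit expansion (its formula \eqref{Psi}) that $\Psi(t)-\Psi(0)$ is $t$ times a sum of terms each containing two $V$-factors --- hence lies in $\Ical$ by \eqref{normCS} and is $\|\cdot\|_\Ical$-continuous --- and finally obtains the weight $(1-t)$ by integrating by parts in the scalar integral $\int_0^1\tau_\Ical(\Psi(t))\,dt$, using $\frac{d}{dt}\tau_\Ical(\Psi(t))=\tau_\Ical(F''(t))$. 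What your route buys is brevity and a cleaner conceptual statement (a genuine higher-order analogue of \cite[Lemma 5.6]{dsD}); what the paper's route buys is self-containedness --- it re-derives the crucial ideal membership by hand from the two-$V$-factor structure rather than leaning on the full strength of Lemma \ref{d2lemma}(ii), and it only ever needs ideal-norm continuity of the first-derivative difference, with the second derivative entering only at the scalar level. Your closing observation that one must track the combined expression $F(1)-F(0)-F'(0)$ rather than its individual pieces is exactly the right point, and is the same observation that drives the paper's proof.
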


\begin{proof}
Consider the function
\begin{align*}
\Psi(t)=\frac{d}{ds}\bigg|_{s=t}f(\Xt_n(s))-\frac{d}{ds}\bigg|_{s=0}f(\Xt_n(s)).
\end{align*}
By Lemma \ref{dlemma}, the above derivatives exist in the operator norm and the function $\Psi(t)$ is continuous in the operator norm. Hence, for every continuous linear functional $\phi$ on the space of bounded operators,
\begin{align*}
&\phi\left(f(\Bt_n)-f(\At_n)-\frac{d}{ds}\bigg|_{s=0}f(\Xt_n(s))\right)
=\phi(f(\Bt_n))-\phi(f(\At_n))-\frac{d}{ds}\bigg|_{s=0}\phi(f(\Xt_n(s)))\\
&=\int_0^1 \frac{d}{ds}\bigg|_{s=t}\phi\big(f(\Xt_n(s))\big)\,dt-
\frac{d}{ds}\bigg|_{s=0}\phi(f(\Xt_n(s)))=\int_0^1\phi(\Psi(t))\,dt.
\end{align*}
Therefore,
\begin{align}
\label{tviapsi}
f(\Bt_n)-f(\At_n)-\frac{d}{ds}\bigg|_{s=0}f(\Xt_n(s))=\int_0^1 \Psi(t)\,dt,
\end{align}
where the integral converges in the operator norm.

We claim that $\Psi$ is continuous in the ideal norm $\|\cdot\|_\Ical$ on $[0,1]$, but provide details only  for continuity at $t=0$.
Applying \eqref{dfla} and \eqref{mondif} gives
\begin{align}
\label{Psi}
&\Psi(t)-\Psi(0)=\sum_{j=1}^n
\sum_{k_1,\ldots,k_n\geq 0}\!\! c_{k_1,\ldots,k_n}\!\!\!\sum_{\substack{p_{0}^j,\,p_{1}^j\geq 0\\p_{0}^j+p_{1}^j=k_j-1}}\!\!\!\big(g_{j,p_0^j,p_1^j}(t)-g_{j,p_0^j,p_1^j}(0)\big)
\\
\nonumber
&=t\sum_{j=1}^n
\sum_{k_1,\ldots,k_n\geq 0}\!\! c_{k_1,\ldots,k_n}\!\left(\sum_{\substack{p_{0}^j,\,p_{1}^j\geq 0\\p_{0}^j+p_{1}^j=k_j-1}}\!\!\!\bigg(\sum_{i=1}^{j-1}S_{i<j}+\sum_{i=j+1}^n S_{i>j}\bigg)
+\sum_{\substack{p_{0}^j,\,p_{1}^j,\,p_2^j\geq 0\\p_{0}^j+p_{1}^j+p_2^j=k_j-1}}S_j\right),
\end{align}
where
\begin{align*}
S_{i<j}&=T_{k_1,\ldots,k_{i-1}}(\At_{i-1})X_i(t)^{q_0^i}\,V_i\, A_i^{q_1^i} T_{k_{i+1},\ldots,k_{j-1},p_0^j}(_{i+1}\Xt_j(t))\,V_j\, T_{p_1^j,k_{j+1},\ldots,k_n}(_{j}\Xt_n(t)),\\
S_{i>j}&=T_{k_1,\ldots,k_{j-1},p_0^j}(\At_j)V_j T_{p_1^j,\ldots,k_{i-1}}(_{j}\At_{i-1})
X_i(t)^{q_0^i}\,V_i\, A_i^{q_1^i} T_{k_{i+1},\ldots,k_n}(_{i+1}\Xt_n(t)),\\
S_j&=T_{k_1,\ldots,k_{j-1}}(\At_{j-1})X_j(t)^{p_0^j}\,V_j\,A_j^{p_1^j}\,V_j\, T_{p_2^j,k_{j+1},\ldots,k_n}(_{j}\Xt_n(t))\\
&\quad\quad+T_{k_1,\ldots,k_{j-1},p_0^j}(\At_j)\,V_j\,X_j(t)^{p_1^j}\,V_j\,A_j^{p_2^j}
T_{k_{j+1},\ldots,k_n}(_{j+1}\Xt_n(t)).
\end{align*}
Thus, $\Psi$ is continuous in the ideal norm $\|\cdot\|_\Ical$ and, along with \eqref{tviapsi}, this implies
\begin{align}
\label{almost}
\tau_\Ical\left(f(\Bt_n)-f(\At_n)-\frac{d}{ds}\bigg|_{s=0}f(\Xt_n(s))\right)
=\int_0^1\tau_\Ical(\Psi(t))\,dt.
\end{align}
One can see from \eqref{Psi} and \eqref{d2fla} that
\begin{align*}
\frac{d}{ds}\bigg|_{s=t}\tau_\Ical(\Psi(s))=\tau_\Ical\left(\frac{d^2}{ds^2}\bigg|_{s=t}f(\Xt_n(s))\right). \end{align*}
Therefore, integrating by parts in \eqref{almost} completes the proof.
\end{proof}

\begin{thm}
\label{tr2thm}
Assume Notations \ref{not}, Hypotheses \ref{hyp2}, and that $[V_i,V_j]=0$, for $1\leq i\neq j\leq n$. If $\{\At_n,\Bt_n\}\in\Ncal_n$, then there exist finite measures $\nu_{ij}$, $1\leq i\leq j\leq n$, on $\Omega^n$ such that
\begin{align}
\label{nin}
\|\nu_{ij}\|\leq\frac12\big(\tau_\Ical(|V_i|^2)\big)^{1/2}\big(\tau_\Ical(|V_j|^2)\big)^{1/2},
\end{align}
\begin{align}
\label{nutau}
\nu_{ij}(\Omega^n)=\frac12\tau_\Ical(V_iV_j),
\end{align}
and
\begin{align}
\label{tf2}
\nonumber
&\tau_\Ical\left(f(\Bt_n)-f(\At_n)-\frac{d}{ds}\bigg|_{s=0}f(\Xt_n(s))\right)
=\sum_{1\leq j\leq n}\int_{\Omega^n}\frac{\partial^2 f }{\partial z_j^2}(z_1,\ldots,z_n)\,d\nu_{jj}(z_1,\ldots,z_n)\\
&\quad\quad+2\sum_{1\leq i< j\leq n}
\int_{\Omega^n}\frac{\partial^2 f }{\partial z_i\partial z_j}(z_1,\ldots,z_n)\,d\nu_{ij}(z_1,\ldots,z_n),
\end{align}
for every $f\in\AD$, where $\Omega=\T$.

For every $1\leq j\leq n$ and $g\in\Acal(\D_{1+\eps})$,
\begin{align}
\label{connection2}
\int_{\Omega^n}g(z_j)\,d\nu_{jj}(z_1,\ldots,z_n)=\int_{\Omega}g(z)\,d\nu_{A_j,B_j}(z),
\end{align}
where $\nu_{A_j,B_j}$ is the measure given by Theorem \ref{dsmt2} for the pair $(A_j,B_j)$.

If, in addition, $\At_n$ and $\Bt_n$ are tuples of self-adjoint contractions, then there exist real-valued measures $\nu_{ij}$, $1\leq i\leq j\leq n$, such that  \eqref{nin}--\eqref{tf2} hold with $\Omega=[-1,1]$ for $f(z_1,\ldots,z_n)\in \Acal((-1-\eps,1+\eps)^n)$, $\eps>0$.
\end{thm}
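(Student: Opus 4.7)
The plan is to repeat the Riesz--Markov/Hahn--Banach construction used in the proof of Theorem \ref{trthm}, replacing the first-order ingredients (Corollary \ref{ftc} and Theorem \ref{dthm}) by their second-order analogues Lemma \ref{remr2} and Theorem \ref{d2thm}. The hypothesis $\{\At_n,\Bt_n\}\in\Ncal_n$ combined with $[V_i,V_j]=0$ and Lemma \ref{pathperturb} ensures that every tuple $\Xt_n(t)$, $t\in[0,1]$, lies in $\Ccal_n$ and admits a commuting contractive normal dilation, so the hypotheses of both Lemma \ref{remr2} and Theorem \ref{d2thm} are met along the full linear path.

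Combining Lemma \ref{remr2} with the decomposition \eqref{d2fla} yields
\begin{align*}
&\tau_\Ical\left(f(\Bt_n)-f(\At_n)-\frac{d}{ds}\bigg|_{s=0}f(\Xt_n(s))\right)\\
&\quad=\sum_{j=1}^n\int_0^1(1-t)\,\tau_\Ical\bigl(D^{j,j}_f(t)\bigr)\,dt+2\!\!\sum_{1\leq i<j\leq n}\!\!\int_0^1(1-t)\,\tau_\Ical\bigl(D^{i,j}_f(t)\bigr)\,dt,
\end{align*}
and applying Theorem \ref{d2thm} pointwise in $t$, together with $\int_0^1(1-t)\,dt=\tfrac12$, gives
\[\left|\int_0^1(1-t)\,\tau_\Ical\bigl(D^{i,j}_f(t)\bigr)\,dt\right|\leq\tfrac12\bigl(\tau_\Ical(|V_i|^2)\bigr)^{1/2}\bigl(\tau_\Ical(|V_j|^2)\bigr)^{1/2}\left\|\frac{\partial^2 f}{\partial z_i\partial z_j}\right\|_{L^\infty(\Omega^n)},\]
for $\Omega=\T$ (and $\Omega=[-1,1]$ in the self-adjoint case) and all $1\leq i\leq j\leq n$.

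For each such pair $(i,j)$, the expression $D^{i,j}_f(t)$ involves only the Taylor coefficients $c_{k_1,\ldots,k_n}$ of $f$ with $k_i,k_j\geq 1$ (respectively $k_j\geq 2$ when $i=j$), and these are precisely the coefficients recoverable from $\partial^2 f/(\partial z_i\partial z_j)$. Therefore the map
\[L_{ij}\bigl(\partial^2 f/(\partial z_i\partial z_j)\bigr)=\int_0^1(1-t)\,\tau_\Ical\bigl(D^{i,j}_f(t)\bigr)\,dt\]
defines an unambiguous linear functional on the subspace $\mathcal{D}_{ij}\subset C(\Omega^n)$ of all such mixed partials, bounded in sup norm by $\tfrac12(\tau_\Ical(|V_i|^2))^{1/2}(\tau_\Ical(|V_j|^2))^{1/2}$. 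Extending via Hahn--Banach and representing by Riesz--Markov produces the required measures $\nu_{ij}$ satisfying \eqref{nin} and \eqref{tf2}; in the self-adjoint case $L_{ij}$ is real-valued on real-valued mixed partials, so the extension and representing measures can be chosen real-valued.

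Finally, \eqref{nutau} and \eqref{connection2} will be obtained by testing \eqref{tf2} against specific $f$: taking $f=z_iz_j$ with $i<j$ and $f=z_j^2$ reduces both sides to a scalar identity that produces \eqref{nutau} after cancellation (using $[V_i,V_j]=0$), while $f(z_1,\ldots,z_n)=F(z_j)$ with $F''=g$ reduces the left-hand side of \eqref{tf2} to $\tau_\Ical(F(B_j)-F(A_j)-F'(A_j)V_j)$ and, via Theorem \ref{dsmt2}, identifies it with $\int_\Omega g\,d\nu_{A_j,B_j}$, yielding \eqref{connection2}. The main delicate point I expect is the well-definedness of $L_{ij}$ on $\mathcal{D}_{ij}$; once the coefficient bookkeeping above is settled, the remainder of the argument is a direct second-order translation of the first-order proof of Theorem \ref{trthm}.
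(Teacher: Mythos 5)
Your proposal is correct and follows essentially the same route as the paper: Lemma \ref{remr2} plus the decomposition of Lemma \ref{d2lemma} give the integral identity, Theorem \ref{d2thm} supplies the bound with the factor $\int_0^1(1-t)\,dt=\tfrac12$, Hahn--Banach and Riesz--Markov produce the measures, and testing against $z_iz_j$ and $F(z_j)$ with $F''=g$ yields \eqref{nutau} and \eqref{connection2}. Your explicit attention to the well-definedness of the functional on the subspace of mixed partials is a detail the paper leaves implicit, but it is not a departure from its argument.
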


\begin{proof}
Combining the results of Lemmas \ref{remr2} and \ref{d2lemma}, we obtain
\begin{align}
\label{tinside2}
\nonumber
&\tau_\Ical\left(f(\Bt_n)-f(\At_n)-\frac{d}{ds}\bigg|_{s=0}f(\Xt_n(s))\right)\\
&\quad=2\sum_{1\leq i< j\leq n}\int_0^1(1-t)\tau_\Ical(D^{i,j}_f(t))\,dt
+\sum_{1\leq j\leq n}\int_0^1(1-t)\tau_\Ical(D^{j,j}_f(t))\,dt.
\end{align}
By the Riesz-Markov representation theorem for a bounded linear functional on the space of continuous functions on a compact set and by the Hahn-Banach theorem, we deduce from \eqref{tinside2} and Theorem \ref{d2thm} existence of measures $\mu_j$ satisfying \eqref{nin} and \eqref{tf2}. Applying \eqref{tf2} to $f(z_1,\ldots,z_n)=z_i z_j$, $1\leq i\leq j\leq n$, gives \eqref{nutau} and applying \eqref{tf2} to $f(z_1,\ldots,z_n)=f(z_j)$ such that $\frac{\partial^2 f}{\partial z_j^2}=g$ gives \eqref{connection2}.
\end{proof}

\begin{cor}
\label{arem2}
Let $(A_1,A_2),(B_1,B_2)$ be tuples of commuting self-adjoint operators such that $\|A_1+\i A_2\|\leq 1$, $\|B_1+\i B_2\|\leq 1$, $V_1=B_1-A_1, V_2=B_2-A_2\in\Ical^{1/2}$, and $[V_1,V_2]=0$. Then, there exist real-valued measures $\nu_{11},\nu_{12},\nu_{22}$ satisfying \eqref{nin}--\eqref{connection2} with $\Omega=[-1,1]$ and supported in $\overline\D$.
\begin{enumerate}[(i)]

\item \label{arem2ii} For $f\in\Acal(\D_{1+\eps})$,
\begin{align*}
&\tau_\Ical\left(f(B_1+\i B_2)-f(A_1+\i A_2)-\frac{d}{ds}\bigg|_{s=0}f(A_1+sV_1+\i(A_2+sV_2))\right)\\
&\quad=\int_{\overline\D}f''(z)\,d(\nu_{11}-\nu_{22}+2\i\nu_{12})(z).
\end{align*}

\item \label{arem2iii}
Let $j=1,2$.
If $A_i=B_i$ for $i\in\{1,2\}\setminus\{j\}$, then $\nu_{ii}=\nu_{ij}=0$ and, for $g\in\Acal((-1-\eps,1+\eps))$, $\eps>0$,
\begin{align}
\label{con2}
\int_{\overline\D}g(x_i)\,d\nu_{jj}(x_1,x_2)=\frac12\tau\big(g(A_i)V_j^2\big).
\end{align}
\end{enumerate}
\end{cor}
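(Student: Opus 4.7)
The plan is to derive all three claims from Theorem \ref{tr2thm} applied to the pair $\At_2 = (A_1, A_2)$ and $\Bt_2 = (B_1, B_2)$. First, I verify $\{\At_2, \Bt_2\} \in \Ncal_2$: since $[V_1, V_2] = 0$, Lemma \ref{pathperturb} makes $\Xt_2(t) = (A_1 + tV_1, A_2 + tV_2)$ a commuting pair of self-adjoint operators for every $t \in [0,1]$, and each entry satisfies $\|A_j + tV_j\| \leq (1-t)\|A_j\| + t\|B_j\| \leq 1$ because $\|A_j\| \leq \|A_1 + \i A_2\| \leq 1$ and similarly for $B_j$. A commuting pair of self-adjoint contractions is normal and hence is its own commuting contractive normal dilation, so \eqref{dilfla} holds trivially. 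Theorem \ref{tr2thm} then delivers real-valued measures $\nu_{ij}$ on $[-1,1]^2$ satisfying \eqref{nin}--\eqref{connection2}.

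To see that these measures can be chosen supported in $\overline{\D}$, one inspects the proof of Theorem \ref{d2thm}: the relevant traces are expressed through the joint spectral measures $E_t$ of $\Xt_2(t)$, so the measures $\nu_{ij}$ produced by the Riesz--Markov step in the proof of Theorem \ref{tr2thm} may be supported in any compact set containing $\bigcup_{t \in [0,1]}\sigma(\Xt_2(t))$. Since the normal operator $(A_1 + tV_1) + \i(A_2 + tV_2) = (1-t)(A_1 + \i A_2) + t(B_1 + \i B_2)$ has norm at most $1$, its spectrum, identified with $\sigma(\Xt_2(t)) \subset \R^2 \cong \C$, lies in $\overline{\D}$ for every $t \in [0,1]$. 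This is the same refinement made in the proof of Corollary \ref{arem1}.

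For part \eqref{arem2ii}, set $F(z_1, z_2) = f(z_1 + \i z_2)$. Commutativity of $A_1$ and $A_2$ (and along the linear path) gives $f(A_1 + \i A_2) = F(A_1, A_2)$, $f(B_1 + \i B_2) = F(B_1, B_2)$, and $f(A_1 + sV_1 + \i(A_2 + sV_2)) = F(\Xt_2(s))$ via the joint functional calculus. Direct differentiation yields $\frac{\partial^2 F}{\partial z_1^2} = f''$, $\frac{\partial^2 F}{\partial z_2^2} = -f''$, and $\frac{\partial^2 F}{\partial z_1 \partial z_2} = \i f''$, all evaluated at $z_1 + \i z_2$. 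Substituting into \eqref{tf2} and identifying $\overline{\D} \subset [-1,1]^2$ through $(x_1, x_2) \leftrightarrow x_1 + \i x_2$ combines the three terms into the single integral $\int_{\overline{\D}} f''(z)\, d(\nu_{11} - \nu_{22} + 2\i\nu_{12})(z)$ claimed in (i).

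For part \eqref{arem2iii}, the hypothesis $A_i = B_i$ gives $V_i = 0$, and the bound \eqref{nin} immediately forces $\nu_{ii} = \nu_{ij} = 0$. To establish \eqref{con2}, apply \eqref{tf2} to the polynomial $f(x_1, x_2) = g(x_i)\, x_j^2$: only the term $\frac{\partial^2 f}{\partial x_j^2} = 2g(x_i)$ survives on the right, yielding $2\int_{\overline{\D}} g(x_i)\, d\nu_{jj}$. On the operator side, $[A_i, V_j] = 0$ (because $A_i = B_i$ commutes with $B_j = A_j + V_j$), and a direct expansion together with $\frac{d}{ds}\big|_{s=0}(A_j + sV_j)^2 = A_j V_j + V_j A_j$ reduces the Taylor remainder $f(\Bt_2) - f(\At_2) - \frac{d}{ds}\big|_{s=0}f(\Xt_2(s))$ to $g(A_i) V_j^2$. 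Taking $\tau_\Ical$ of both sides yields \eqref{con2}. The main technical subtlety to handle carefully is to verify that $F(z_1, z_2) = f(z_1 + \i z_2)$ legitimately sits in the function-space domain of \eqref{tf2}; this can be arranged either by a slight radius shrinkage so that $F \in \Acal((-1-\eps',1+\eps')^2)$ for an appropriate $\eps' > 0$, or by approximating $F$ on the compact support $\overline{\D}$ of the measures by its partial Taylor sums and passing to the limit in the trace formula.
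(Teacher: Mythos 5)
Your proof is correct and follows the same route as the paper, which simply invokes Theorem \ref{tr2thm} (plus the support refinement already made in Corollary \ref{arem1}) and, for \eqref{con2}, applies \eqref{tf2} to $f(x_1,x_2)=g(x_i)x_j^2$; you supply the details (membership in $\Ncal_2$, support in $\overline\D$, the substitution $F(z_1,z_2)=f(z_1+\i z_2)$, the expansion of the Taylor remainder to $g(A_i)V_j^2$) that the paper labels ``straightforward.'' One caveat: of your two suggested fixes for the domain issue in part \eqref{arem2ii}, the radius shrinkage does not work, since the double power series of $f(z_1+\i z_2)$ converges absolutely only where $|z_1|+|z_2|<1+\eps$, so $F\notin\Acal((-1-\eps',1+\eps')^2)$ for any $\eps'>0$ unless $\eps>1$; your second fix, approximating by partial Taylor sums (polynomials, to which \eqref{tf2} applies directly) and passing to the limit using the uniform convergence of $f_N''$ on the compact support $\overline\D$ and the second-order trace bounds, is the one that works.
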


\begin{proof}
The property \eqref{arem2ii} is straightforward. The representation \eqref{con2} follows from \eqref{tf2} applied to $f(x_1,x_2)=g(x_i)x_j^2$, $1\leq i\neq j\leq 2$.
\end{proof}

\begin{prop}
\label{Dix2thm}
Suppose $\tau_\Ical(\Ical^{3/2})=\{0\}$ and assume Notations \ref{not} and Hypotheses \ref{hyp2}. If $\At_n$ satisfies \eqref{dilfla}, then there exist finite measures $\nu_{ij}$ on $\T^n$, $1\leq i,j\leq n$, such that \eqref{nin}--\eqref{tf2} hold.
\end{prop}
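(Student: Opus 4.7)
The plan is to mimic the strategy of Proposition \ref{Dixthm} at the second-order level. I would start with the integral representation of Lemma \ref{remr2},
\begin{align*}
\tau_\Ical\left(f(\Bt_n)-f(\At_n)-\frac{d}{ds}\bigg|_{s=0}f(\Xt_n(s))\right)
=\int_0^1(1-t)\,\tau_\Ical\left(\frac{d^2}{ds^2}\bigg|_{s=t}f(\Xt_n(s))\right)dt,
\end{align*}
and aim to replace the integrand by its value at $t=0$, so that the surviving factor $\int_0^1(1-t)\,dt=\tfrac12$ converts the estimates of Theorem \ref{d2thm} (applied at $t=0$, where $\At_n$ satisfies \eqref{dilfla}) directly into the bounds \eqref{nin}.

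The heart of the argument is the claim that $D^{i,j}_f(t)-D^{i,j}_f(0)$ and $D^{j,j}_f(t)-D^{j,j}_f(0)$ lie in $\Ical^{3/2}$ for every $t\in[0,1]$. Each summand in the expansion \eqref{d2fla} of $\frac{d^2}{ds^2}|_{s=t}f(\Xt_n(s))$ already carries two factors of $V_\bullet\in\Ical^{1/2}$. Applying \eqref{mondif} to each power of $X_k(t)$ I would write $X_k(t)^m-A_k^m=t\sum_{p_0+p_1=m-1}X_k(t)^{p_0}V_k A_k^{p_1}$, which contributes an additional $V_k\in\Ical^{1/2}$. Expanding $D^{i,j}_f(t)$ and $D^{j,j}_f(t)$ around their $t=0$ values in this way, each surviving summand in the difference carries at least three $\Ical^{1/2}$-factors. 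The singularity hypothesis $\tau_\Ical(\Ical^{3/2})=\{0\}$ then yields $\tau_\Ical(D^{i,j}_f(t))=\tau_\Ical(D^{i,j}_f(0))$ and $\tau_\Ical(D^{j,j}_f(t))=\tau_\Ical(D^{j,j}_f(0))$ for all $t$.

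Substituting these equalities back into Lemma \ref{remr2} gives
\[\tau_\Ical\left(f(\Bt_n)-f(\At_n)-\frac{d}{ds}\bigg|_{s=0}f(\Xt_n(s))\right)=\sum_{1\leq i<j\leq n}\tau_\Ical\left(D^{i,j}_f(0)\right)+\tfrac12\sum_{1\leq j\leq n}\tau_\Ical\left(D^{j,j}_f(0)\right).\]
Theorem \ref{d2thm} bounds each trace on the right by $(\tau_\Ical(|V_i|^2))^{1/2}(\tau_\Ical(|V_j|^2))^{1/2}\|\partial^2 f/\partial z_i\partial z_j\|_{L^\infty(\T^n)}$, so the functionals $\partial^2 f/\partial z_j^2\mapsto\tfrac12\tau_\Ical(D^{j,j}_f(0))$ and $\partial^2 f/\partial z_i\partial z_j\mapsto\tfrac12\tau_\Ical(D^{i,j}_f(0))$ are well defined and bounded. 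The Riesz--Markov and Hahn--Banach construction used in the proof of Theorem \ref{tr2thm} transports verbatim, producing measures $\nu_{ij}$ on $\T^n$ satisfying \eqref{nin} and \eqref{tf2}; evaluating \eqref{tf2} at the monomials $f(z_1,\ldots,z_n)=z_iz_j$ then delivers \eqref{nutau}.

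I expect the main obstacle to be the bookkeeping in the $\Ical^{3/2}$-membership claim: since $f$ is given by an infinite series, the identities $D^{i,j}_f(t)-D^{i,j}_f(0)\in\Ical^{3/2}$ should first be established for polynomial truncations $f_N$, where the sums are finite and the claim is unambiguous, and then extended to $f$ by an absolute-convergence argument using \eqref{normCS} to sum the tails in $\|\cdot\|_\Ical$, together with $\|\cdot\|_\Ical$-continuity of $\tau_\Ical$. This is a direct adaptation of the norm estimates already implicit in Lemma \ref{d2lemma} and in the $\Ical^2$-statement inside the proof of Proposition \ref{Dixthm}.
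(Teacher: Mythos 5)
Your proposal is correct and follows essentially the same route as the paper: both reduce to the identity $\tau_\Ical\big(f(\Bt_n)-f(\At_n)-\frac{d}{ds}\big|_{s=0}f(\Xt_n(s))\big)=\frac12\tau_\Ical\big(\frac{d^2}{ds^2}\big|_{s=0}f(\Xt_n(s))\big)$ by using $\tau_\Ical(\Ical^{3/2})=\{0\}$ to discard terms carrying three $\Ical^{1/2}$-factors, and then repeat the Riesz--Markov argument of Theorem \ref{tr2thm} with the $t=0$ estimates of Theorem \ref{d2thm}. The only (immaterial) difference is that you obtain the identity by showing the integrand in Lemma \ref{remr2} is constant in $t$, whereas the paper's sketch expands the second-order Taylor remainder directly, in analogy with Proposition \ref{Dixthm}.
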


\begin{proof}
The proof is analogous to the one of Proposition \ref{Dix2thm}. Firstly one establishes
\begin{align*}
\tau_\Ical\left(f(\Bt_n)-f(\At_n)-\frac{d}{ds}\bigg|_{s=0}f(\Xt_n(s))\right)
=\frac12\tau_\Ical\left(\frac{d^2}{ds^2}\bigg|_{s=0}f(\Xt_n(s))\right).
\end{align*}
Repeating the reasoning in the proof of Theorem \ref{tr2thm} completes the proof of existence of measures satisfying \eqref{nin}--\eqref{tf2}.
\end{proof}

\section*{Acknowledgment}

The author is thankful to Mike Jury for consultation on multivariate unitary dilations.

\bibliographystyle{plain}

\end{document}